\newcommand{\esca}[1]{\langle {#1} \rangle}
\newcommand{\doble}[1]{\ll \hspace{-0.1cm} {#1} \hspace{-0.1cm} \gg}
\newcommand{\curvo}[1]{\prec \hspace{-0.1cm} {#1} \hspace{-0.1cm} \succ}
\def\dist{{\rm dist}}
\def\Ccal{\mathcal{C}}
\def\Acal{\mathcal{A}}
\def\Vcal{\mathcal{V}}
\def\Dcal{\mathcal{D}}
\def\Ncal{\mathcal{N}}
\def\Scal{\mathcal{S}}
\def\Mcal{\mathcal{M}}
\def\Hcal{\mathcal{H}}
\def\Bcal{\mathcal{B}}
\def\Ocal{\mathcal{O}}
\def\Rcal{\mathcal{R}}
\def\Xcal{\mathcal{X}}
\def\Ycal{\mathcal{Y}}
\def\Lcal{\mathcal{L}}
\def\Wcal{\mathcal{W}}
\def\c{\mathbb{C}}
\def\R{\mathbb{R}}\def\r{\mathbb{R}}
\def\z{\mathbb{Z}}
\def\N{\mathbb{N}}\def\n{\mathbb{N}}
\def\s{\mathbb{S}}
\def\H{\mathbb{H}}\def\h{\mathbb{H}}
\def\k{\mathbb{K}}
\def\mgot{\mathfrak{m}}
\def\Mgot{\mathfrak{M}}
\def\Fgot{\mathfrak{F}}
\def\Pgot{\mathfrak{P}}
\def\Bsf{\mathsf{B}}
\def\Nsf{\mathsf{N}}
\def\Tsf{\mathsf{T}}
\def\csf{\mathsf{c}}
\def\esf{\mathsf{e}}
\newtheorem{theorem}{Theorem}[section]
\newtheorem{claim}[theorem]{Claim}
\newtheorem{lemma}[theorem]{Lemma}
\newtheorem{corollary}[theorem]{Corollary}
\newtheorem{remark}[theorem]{Remark}
\newtheorem{definition}[theorem]{Definition}
\theoremstyle{definition}
\numberwithin{equation}{section}
\numberwithin{figure}{section}
\begin{document}

\thispagestyle{empty}

\vspace*{1cm}
\noindent{\bf\LARGE Compact complete null curves in Complex 3-space}

\vspace*{0.5cm}

\noindent{\large\bf Antonio Alarc\'{o}n and Francisco J. L\'{o}pez}

\footnote[0]{\vspace*{-0.4cm}

\noindent A. Alarc\'{o}n

\noindent Departamento de Geometr\'{\i}a y Topolog\'{\i}a, Universidad de Granada, E-18071 Granada, Spain

\noindent e-mail: {\tt alarcon@ugr.es}

\vspace*{0.1cm}

\noindent F.J. L\'{o}pez

\noindent Departamento de Geometr\'{\i}a y Topolog\'{\i}a, Universidad de Granada, E-18071 Granada, Spain

\noindent e-mail: {\tt fjlopez@ugr.es}

\vspace*{0.1cm}

\noindent Both authors are partially supported by MCYT-FEDER research project MTM2007-61775 and Junta de Andaluc\'{i}a Grant P09-FQM-5088}

\vspace*{1cm}

{\small
\noindent {\bf Abstract}\hspace*{0.1cm} We prove that for any open orientable surface $S$ of finite topology, there exist a  Riemann surface $\Mcal,$ a relatively compact domain $M\subset\Mcal$ and a continuous map $X:\overline{M}\to\c^3$ such that:
\begin{itemize}
\item $\Mcal $ and $M$ are homeomorphic to $S,$ $\Mcal-M$ and $\Mcal-\overline{M}$ contain no relatively compact components in $\Mcal,$
\item $X|_M$ is a complete null holomorphic curve, $X|_{\overline{M}-M}:\overline{M}-M\to\c^3$ is an embedding and the Hausdorff dimension of $X(\overline{M}-M)$ is  $1.$
\end{itemize}

Moreover, for any $\epsilon>0$ and  compact null holomorphic curve $Y:N\to\c^3$ with non-empty boundary $Y(\partial N),$ there exist Riemann surfaces $M$ and $\Mcal$  homeomorphic to $N^\circ$  and a map $X:\overline{M}\to\c^3$ in the above conditions such that $\delta^H(Y(\partial N),X(\overline{M}-M))<\epsilon,$ where $\delta^H(\cdot,\cdot)$ means Hausdorff distance in $\c^3.$

\vspace*{0.1cm}

\noindent{\bf Keywords}\hspace*{0.1cm} Null holomorphic curves, Calabi-Yau problem, Plateau problem.

\vspace*{0.1cm}

\noindent{\bf Mathematics Subject Classification (2010)}\hspace*{0.1cm} 53C42, 32H02, 53A10.
}


\section{Introduction}

Given an open Riemann surface $\Ncal,$ a null (holomorphic) curve in $\c^3$ is a holomorphic immersion $X=(X_j)_{j=1,2,3}:\Ncal\to\c^3$ satisfying that $\sum_{j=1}^3 (dX_j)^2=0,$ where $d$ is the complex differential.
In this paper we deal with the existence of compact complete null curves in $\c^3$ accordingly to the following 
\begin{definition}\label{def:compact}
Let $M$ be a relatively compact domain in an open Riemann surface. A continuous map $X:\overline{M}\to\c^3$ is said to be a compact complete null curve in $\c^3$ if $X|_M$ is a complete null curve.
\end{definition}
The first approach to this matter was made by Mart\'{i}n and Nadirashvili in the context of simply connected minimal surfaces in $\r^3$ \cite{MN}. The corresponding finite topology case was considered in \cite{Al}, see also \cite{Na2,AN} for other related questions. Compact complete minimal surfaces manifest the interplay between two well studied topics on minimal surface theory: Plateau's and Calabi-Yau's problems. The first one consists of finding a compact minimal surface spanning a given closed curve in $\r^3,$ and it was independently solved by Douglas \cite{Do} and Rad\'{o} \cite{Ra}.  The second one deals with the construction of complete minimal surfaces in $\r^3$ with bounded coordinate functions, and  the most relevant examples were given by Jorge-Xavier \cite{JX} and Nadirashvili \cite{Na}.

In the very last few years, the study of the Calabi-Yau problem has evolved in the direction of null curves in $\c^3.$  Observe that a holomorphic map $X:\Ncal\to\c^3$ is a null curve if and only if both $\Re(X):\Ncal\to\r^3$ and $\Im(X):\Ncal\to\r^3$ are conformal minimal immersions. Furthermore, the Riemannian metric on $\Ncal$ induced by $X$ is twice the one induced by $\Re(X)$ and $\Im(X).$ Therefore,  complete bounded null curves in $\c^3$ provide  complete bounded minimal surfaces in $\r^3$ with well defined and bounded conjugate surface, and viceversa. Existence of a vast family of complete bounded null curves with arbitrary topology and other additional properties is known \cite{MUY1,AL2}.

If $M$ is a relatively compact domain in an open Riemann surface  and $F:\overline{M}\to\r^3$ is a continuous map such that $F|_{M}:M\to\r^3$ is a conformal complete minimal immersion whose conjugate immersion $(F|_M)^*:M\to\r^3$ is well defined, then $(F|_M)^*$ does not necessarily extend as a continuous map to $\overline{M}.$ Therefore, it is natural to wonder whether there exist compact complete null curves in $\c^3.$

In this paper we answer this question, proving considerably more. 

\begin{quote}
{\bf Main Theorem.} 
{\em Let $S$ be an open orientable surface of finite topology. 

Then there exist a  Riemann surface $\Mcal,$ a relatively compact domain $M\subset\Mcal$ and a continuous map $X:\overline{M}\to\c^3$ such that:
\begin{itemize}
\item $\Mcal $ and $M$ are homeomorphic to $S,$ $\Mcal-M$ and $\Mcal-\overline{M}$ contain no relatively compact components in $\Mcal,$
\item  $X|_{M}$ is a complete null curve, $X|_{\overline{M}-M}:\overline{M}-M\to\c^3$ is an embedding and the Hausdorff dimension of $X(\overline{M}-M)$ is $1.$
\end{itemize}
 
Moreover, for any $\epsilon>0$ and for any compact null curve $Y:N\to\c^3$ with non-empty boundary $Y(\partial N),$ there exist Riemann surfaces $M$ and $\Mcal$  homeomorphic to $N^\circ$  and a map $X:\overline{M}\to\c^3$ in the above conditions such that $\delta^H(Y(\partial N),X(\overline{M}-M))<\epsilon,$ where $\delta^H(\cdot,\cdot)$ means Hausdorff distance in $\c^3.$}
\end{quote}

Unfortunately, the techniques we use do not give enough control over the topology of $\overline{M}- M$ to assert that it consists, for instance,  of a finite collection of Jordan curves. Concerning the second part of the theorem, it is worth mentioning that there exist Jordan curves in $\c^3$ which are not spanned by any null curve.  Main Theorem actually follows from a more general density  result (see Theorem \ref{th:fun}). Some other similar existence results for complex curves in $\c^2,$ null holomorphic curves in the complex Lie group ${\rm SL}(2,\c),$ Bryant surfaces in hyperbolic 3-space $\h^3$ and minimal surfaces in $\r^3$  can be also derived from it, see Corollary \ref{co:consecuencias}. See \cite{MUY1,MUY2,AL2} for related results.

\subsection*{Acknowledgements}
Part of this work was done when the first author was visiting the Institut de Math\'{e}matiques de Jussieu in Paris. He wish to thank this institution, and in particular Rabah Souam, for the kind invitation and hospitality, and for providing excellent working conditions.


\section{Preliminaries}

We denote by $\|\cdot\|$ and $\langle\cdot,\cdot\rangle$ the Euclidean norm and metric in $\k^n,$ where $\k=\r$ or $\c,$ and for any compact topological space $K$ and continuous map $f:K \to \k^n$ we denote by $\|f\|=\max\{\|f(p)\|\;|\; p \in K\}$ the maximum norm of $f$ on $K.$ We also denote by $\imath=\sqrt{-1}.$

\subsection{Riemann surfaces}
Given a Riemann surface $\Mcal$ with non empty boundary, we denote by $\partial \Mcal$ the (possibly non-connected) 1-dimensional topological manifold determined by its boundary points. For any  $A \subset \Mcal,$ we denote by $A^\circ,$ $\overline{A}$ and $Fr (A)=\overline{A}-A^\circ$ the interior of $A$ in $\Mcal,$ the closure of $A$ in $\Mcal$ and the topological frontier of $A$ in $\Mcal.$ Open connected subsets of $\Mcal$ will be called {\em domains}, and those proper topological subspaces of $\Mcal$ being Riemann surfaces with boundary are said to be {\em regions}. 

A Riemann surface $\Mcal$ is said to be a  {\em bordered Riemann surface} if it is  compact, $\partial M \neq \emptyset$  and $\partial \Mcal$ is smooth. A compact region of $\Mcal$ is said to be a {\em bordered region} if it is a bordered Riemann surface.

\begin{remark} \label{re:prime}
Throughout this paper, $\Rcal_0$ and $\omega$ will denote a fixed but arbitrary bordered Riemann surface and a smooth conformal metric on $\Rcal_0,$ repectively. We call $\Rcal$ the open Riemann surface $\Rcal_0-\partial\Rcal_0,$ and write $\esf\in\n$ for the number of ends of $\Rcal$ (or equivalently, for the number of connected components of $\partial \Rcal_0$).
\end{remark}

A subset $A \subset \Rcal$ is said to be {\em Runge} if $j_*:\Hcal_1(A,\z) \to \Hcal_1(\Rcal_0,\z)$ is injective, where $j:A \to \Rcal_0$ is the inclusion map. If $A$ is a compact region in $\Rcal,$ this simply means that  $\Rcal-A$ has no relatively compact components in $\Rcal.$

In the remainder of this subsection we introduce the necessary notations for a precise statement of the approximation result by null curves in Lemma \ref{lem:runge}, which is the starting point of the paper.

A 1-form $\theta$ on a subset $S\subset \Rcal$ is said to be of type $(1,0)$ if for any conformal chart $(U,z)$ in $\Rcal,$ $\theta|_{U \cap S}=h(z) dz$ for some function $h:U \cap S \to \overline{\c}.$  
%

\begin{definition}

Assume that $S\subset \Rcal$ is compact. We say that
\begin{itemize}
\item a function $f:S \to \c$ can be uniformly approximated on $S$ by holomorphic functions in $\Rcal$ if and only if there exists a sequence of holomorphic functions $\{f_n:\Rcal\to\c\}_{n \in \n}$ such that $\{f_n-f\}_{n \in \n} \to 0$ uniformly on $S,$ and
\item a 1-form $\theta$ on $S$ can be uniformly approximated on $S$ by holomorphic 1-forms in $\Rcal$ if and only if there exists a sequence of holomoprhic 1-forms $\{\theta_n\}_{n \in \n}$ on $\Rcal$ such that $\{\frac{\theta_n-\theta}{dz}\}_{n \in \n} \to 0$ uniformly on $S \cap U,$ for any closed conformal disc $(U,dz)$ on $\Rcal.$
\end{itemize}
\end{definition}

The following definition is crucial in our arguments.

\begin{definition}[Admissible set]
A compact subset $S\subset\Rcal$ is said to be admissible if and only if
\begin{itemize}
\item $S$ is Runge,
\item $M_S:=\overline{S^\circ}$ is a finite collection of pairwise disjoint compact regions in $\Rcal$ with $\mathcal{C}^0$ boundary,
\item $C_S:=\overline{S-M_S}$ consists of a finite collection of pairwise disjoint analytical Jordan arcs, and
\item any component $\alpha$ of $C_S$  with an endpoint  $P\in M_S$ admits an analytical extension $\beta$ in $\Rcal$ such that the unique component of $\beta-\alpha$ with endpoint $P$ lies in $M_S.$
\end{itemize}
\end{definition}
\begin{figure}[ht]
    \begin{center}
    \scalebox{0.35}{\includegraphics{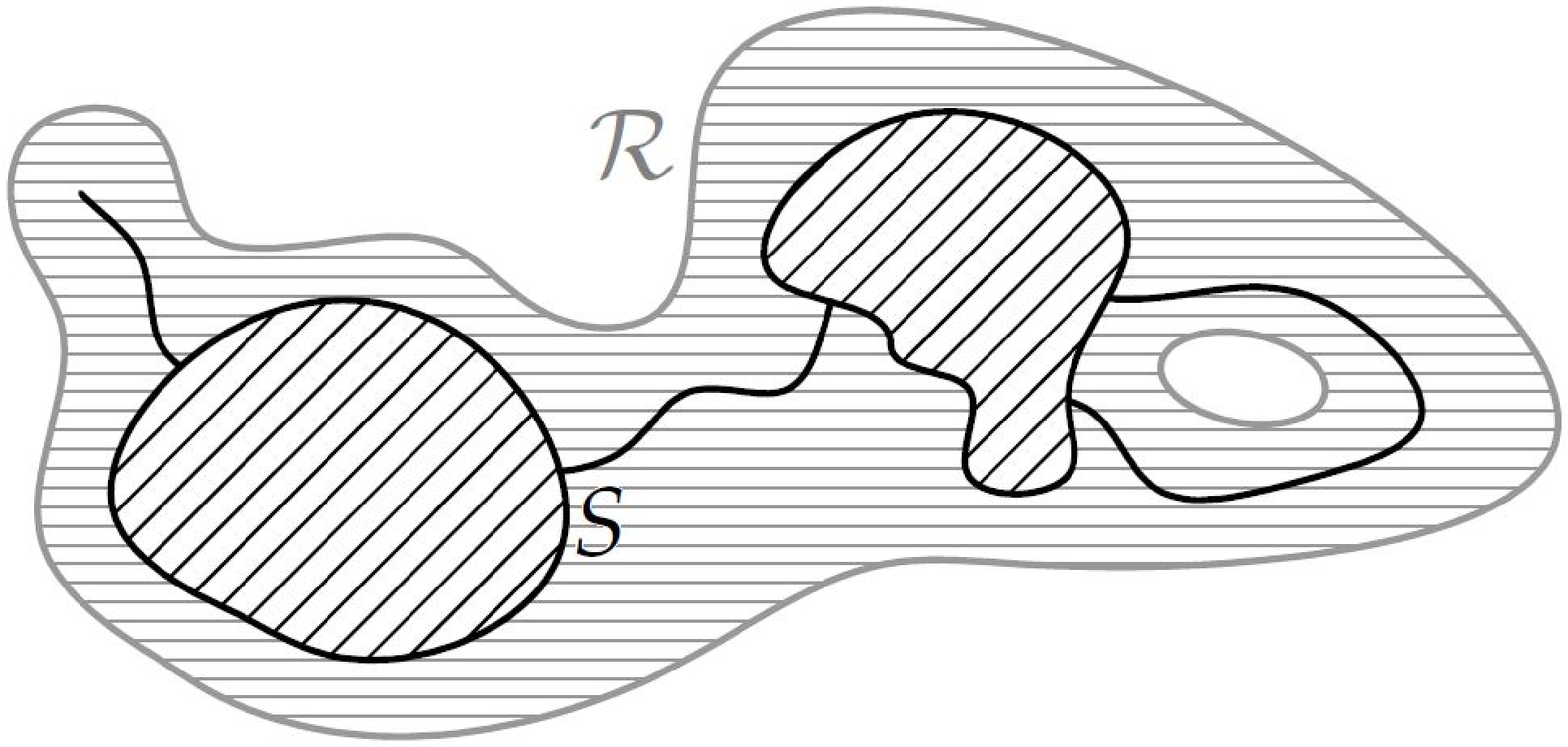}}
        \end{center}
        \vspace{-0.5cm}
\caption{An admissible set $S$ in $\Rcal.$}\label{fig:admi}
\end{figure}

The next one deals with the notion of smoothness of functions and 1-forms on admissible subsets. 
\begin{definition}
Assume that  $S$ is an admissible subset of $\Rcal.$ 
\begin{itemize}
\item A function $f:S \to \c$ is said to be smooth if and only if $f|_{M_S}$  admits a smooth extension $f_0$ to an open domain $V$ in $\Rcal$ containing $M_S,$ and for any component $\alpha$ of $C_S$ and any open analytical Jordan arc $\beta$ in $\Rcal$ containing $\alpha,$  $f$ admits a smooth extension $f_\beta$ to $\beta$ satisfying that $f_\beta|_{V \cap \beta}=f_0|_{V \cap \beta}.$
\item A 1-form $\theta$ on $S$ is said to be smooth if and only if $(\theta|_{S\cap U})/dz$ is  a smooth function for any closed conformal disk $(U,z)$ on $\Rcal$ such that $S\cap U$ is an admissible set.
\end{itemize}
\end{definition}

Given a smooth function $f:S \to \c$ on an admissible $S \subset \Rcal,$ we denote by $df$ the 1-form of type (1,0) given by 
$df|_{M_S}=d (f|_{M_S})$ and $df|_{\alpha \cap U}=(f \circ \alpha)'(x)dz|_{\alpha \cap U},$
where $(U,z=x+\imath y)$ is any conformal chart on $\Rcal$ satisfying that $z(\alpha \cap U)\subset \r.$ 
Then $df$ is well defined  and smooth. 
The $\Ccal^1$-norm of $f$ on $S$ is given by
$$\|f\|_1=\max_S\{\|f\|+\|df/\omega\|\}.$$
A sequence of smooth functions $\{f_n\}_{n \in \n}$ on $S$  is said to converge in the $\Ccal^1$-topology to a smooth function $f$ on $S$ if $\{\|f-f_n\|_1\}_{n \in \n}\to 0.$ If in addition $f_n$ is (the restriction to $S$ of) a holomorphic function on $\Rcal$ for all $n,$  we also say that $f$ can be uniformly $\Ccal^1$-approximated on $S$ by holomorphic functions on $\Rcal.$

Likewise one can define the notions of smoothness, (vectorial) differential,  $\Ccal^1$-norm and uniform $\Ccal^1$-approximation for maps $f:S \to \c^k,$ $k \in \n,$ $S\subset \Rcal$ admissible.

Let $S$ be an admissible subset of $\Rcal,$ and let $W$ be a Runge open subset of $\Rcal$ whose closure $\overline{W}$ in $\Rcal_0$ is a compact region and  $j_*:\Hcal_1(W,\z) \to \Hcal_1(\overline{W},\z)$ is an isomorphism, where $j:W \to \overline{W}$ is the inclusion map. $W$ is said to be a {\em tubular neighborhood} of $S$ if  $S \subset W,$  
$(j_0)_*:\Hcal_1(S,\z) \to \Hcal_1(\overline{W},\z)$ is an isomorphism  and $\chi(W-S)=\chi(\overline{W}-S)=0,$ where  $j_0:S \to \overline{W}$ is the inclusion map and  $\chi(\cdot)$ means Euler characteristic. In particular, $W-S$ (respectively, $\overline{W}-S^\circ\subset\Rcal_0$) consists of a family of  pairwise disjoint open (respectively, compact) annuli.
\begin{figure}[ht]
    \begin{center}
    \scalebox{0.35}{\includegraphics{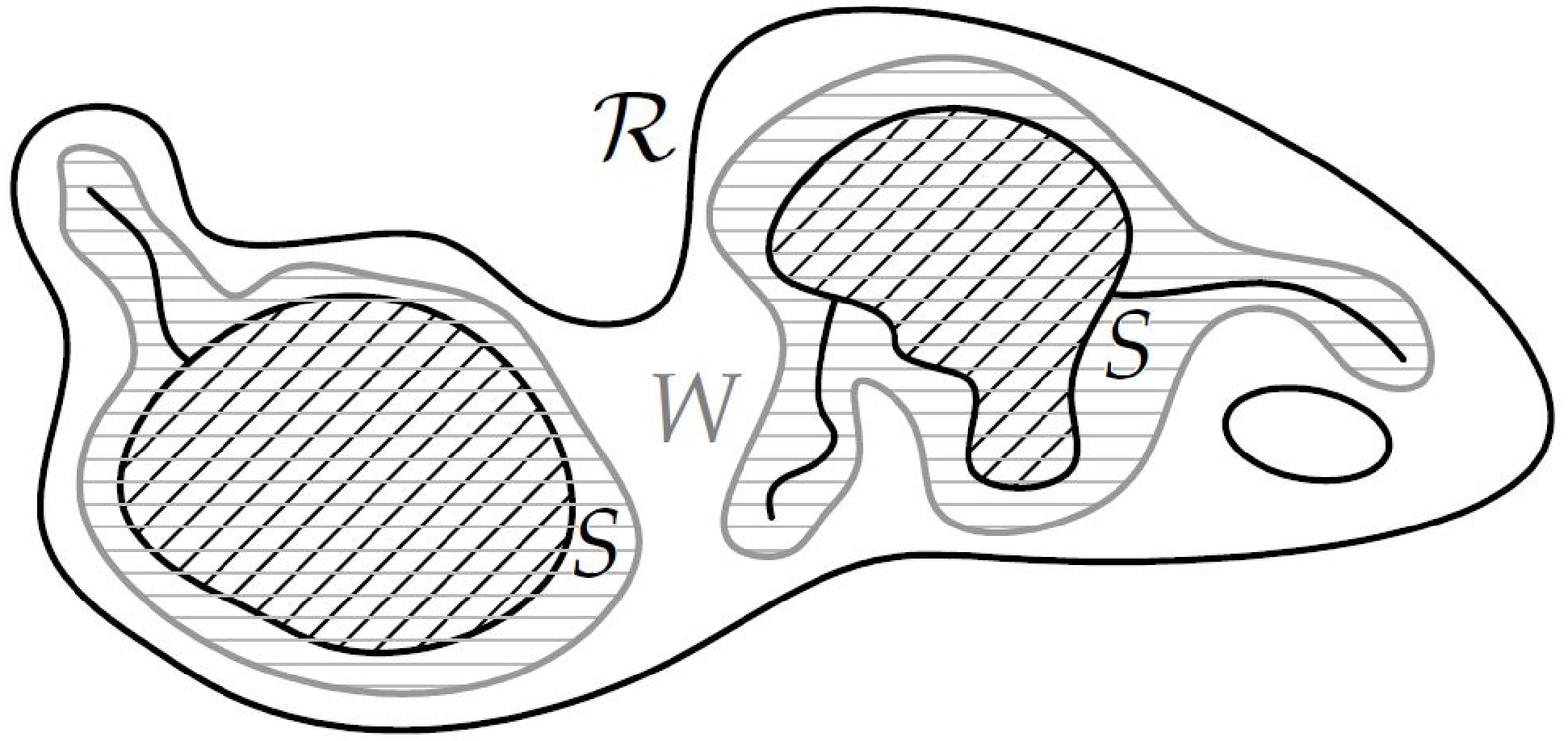}}
        \end{center}
        \vspace{-0.5cm}
\caption{A tubular neighborhood $W$ of an admissible set $S$ in $\Rcal.$}\label{fig:tubular}
\end{figure}

For instance, assume that $S$ is a finite family of pairwise disjoint smooth Jordan curves $\gamma_j,$ $j=1,\ldots,k,$ in $\Rcal,$ take $\epsilon>0$ and set 
$W=\{p \in \Rcal\,|\; \dist_\omega(p,S)<\epsilon\},$ where $\dist_\omega$ means Riemannian distance in $(\Rcal_0,\omega).$ If $\epsilon$ is small enough, the exponential map $F:S \times [-\epsilon,\epsilon] \to \overline{W},$ $F(p,t)=exp_p(t n(p)),$ is a diffeomorphism  and $W=F(S \times (-\epsilon,\epsilon)),$ where $n$ is a normal field along $S$ in $(\Rcal,\omega).$ In this case,  $W$ is said to be  a {\em metric tubular neighborhood} of $S$ (of radious $\epsilon$). Furthermore, if $\pi_1:S \times (-\epsilon,\epsilon) \to S$ denotes the projection $\pi_1(p,t)=p,$ we denote by $\Pgot:W \to S,$  $\Pgot(q):=\pi_1(F^{-1}(q)),$ the natural orthogonal projection. 

\begin{definition}
We denote by $\Bsf(\Rcal)$ the family of Runge bordered regions $M\subset\Rcal$ such that $\Rcal$ is a  tubular neighborhood of $M.$ Given $M,N\in\Bsf(\Rcal),$ we say that $M<N$ if and only if $M\subset N^\circ.$
\end{definition}

\subsection{Null curves in $\c^3$}

Throughout this paper we adopt column notation for both vectors and matrices of linear transformations in $\c^3,$ and make the convention  $$\c^3 \ni (z_1,z_2,z_3)^T \equiv (\Re(z_1),\Im(z_1), \Re(z_2),\Im(z_2),\Re(z_3),\Im(z_3))^T \in \r^6\quad \big( \text{$(\cdot)^T$ means ``transpose"} \big).$$

Let us start this subsection by introducing some operators which are strongly related to the geometry of $\c^3$ and null curves.

\begin{definition}
Let $A\subset\c^3.$ We denote by
\begin{itemize}
\item $\doble{\cdot,\cdot}:\c^3\times\c^3\to \c,$ $\doble{ u,v }=\bar{u}^T \cdot v,$ the usual Hermitian inner product in $\c^3,$
\item $\doble{ A}^\bot=\{v\in\c^3\,|\, \doble{ u,v}=0 \, \forall u \in A\},$
\item $\langle\cdot,\cdot\rangle=\Re(\doble{\cdot,\cdot})$ the Euclidean scalar product of $\c^3\equiv\r^6,$ 
\item $\langle A\rangle^\bot=\{v\in\c^3\,|\, \langle u,v \rangle=0 \, \forall u \in A\},$ 
\item $\curvo{\cdot,\cdot}:\c^3\times\c^3\to\c,$ $\curvo{u,v} = u^T\cdot v,$ and 
\item $\curvo{A}^\bot=\{v\in\c^3\,|\,\curvo{  u, v}=0\,\forall u\in A\}.$
\end{itemize}
\end{definition}
Notice that $\curvo{ \overline{u}}^\bot= \doble{ u}^\bot\subset\esca{u}^\bot$ for all $u\in\c^3,$ and the equality holds if and only if $u=0\in\c^3$. 

\begin{definition} 
A vector $u \in \c^3-\{0\}$ is said to be null if and only if $\curvo{u,u}=0.$ We denote by $\Theta=\{u \in \c^3-\{0\} \,|\, u\text{ \em is null}\}.$

A basis $\{u_1,u_2,u_3\}$ of $\c^3$ is said to be $\curvo{\cdot,\cdot}$-conjugate if and only if $\curvo{ u_j,u_k}=\delta_{jk},$ $j,k\in\{1,2,3\}.$
\end{definition}

We denote by $\Ocal(3,\c)$ the complex orthogonal group $\{A\in\Mcal_3(\c)\,|\, A^T\cdot A=I_3\},$ or in other words, the group of matrices  whose column vectors determine a $\curvo{\cdot,\cdot}$-conjugate basis of $\c^3.$ We also denote by $A:\c^3 \to \c^3$ the complex linear transformation induced by $A \in \Ocal(3,\c).$ 

It is clear that $A(\Theta)=\Theta$ for all $A \in \Ocal(3,\c).$ 

Let $\Ncal$ be an open Riemann surface.

\begin{definition}\label{def:null curve}
A holomorphic map $X:\Ncal\to\c^3$ is said to be a null curve if and only if $\curvo{dX,dX}=0$ and $\doble{dX,dX}$ never vanishes on $\Ncal.$ 

Conversely, given an exact holomorphic vectorial 1-form $\Phi$ on $\Ncal$ satisfying that $\curvo{\Phi,\Phi}=0$ and $\doble{\Phi,\Phi}$ never vanishes on $\Ncal,$ then the map $X:\Ncal\to\c^3,$ $X(P)=\int^P \Phi,$ defines a null curve in $\c^3.$ 
\end{definition}

If $X:\Ncal\to\c^3$ is a null curve then the pull back metric $X^*(\esca{\cdot,\cdot})$ on $\Ncal$ is determined by the expresion $\esca{dX,dX}=\doble{dX,dX}.$ 
Given two subsets $V_1, V_2\subset \Ncal,$ we denote by $\dist_{(\Ncal,X)}(V_1,V_2)$ the distance between $V_1$ and $V_2$ in the Riemannian surface  $(\Ncal,X^*(\esca{\cdot,\cdot})).$

\begin{remark}\label{rem:AF}
Let $X:\Ncal\to\c^3$ be a null curve and $A=(a_{jk})_{j,k=1,2,3}\in\Ocal(3,\c).$ Then $A\circ X:\Ncal\to\c^3$ is a null curve as well and $X^*\esca{\cdot,\cdot}\geq \frac1{ \|A\|^2} (A\circ X)^*\esca{\cdot,\cdot},$ where $\|A\|\,= \big(\sum_{j,k}  |a_{jk}|^2  \big)^{1/2}.$
\end{remark}

\begin{definition}
Given a subset $S\subset \Rcal$ we denote by $\Nsf(S)$ the space of maps $X:S \to\c^3$ extending as a null curve to an open neighborhood of $S$ in $\Rcal.$ 
\end{definition}

\begin{definition}\label{def:gen-null}
Let $S\subset \Rcal$ be an admissible subset.
A smooth map $X:S\to\c^3$ is said to be a generalized null curve in $\c^3$ if and only if
 $X|_{M_S}\in \Nsf(M_S),$ $\curvo{dX,dX}=0$ on $S$ and $\doble{dX,dX}$ never vanishes on $S.$
\end{definition}

The following technical lemma is a key tool in this paper.

\begin{lemma}[Approximation Lemma \cite{AL,AL2}]\label{lem:runge}
Let $S\subset\Rcal$ be a connected admissible set and let $X=(X_j)_{j=1,2,3}:S \to \c^3$ be a generalized null curve.

Then $X$ can be uniformly $\Ccal^1$-approximated on $S$ by a sequence of null curves $\{Y_n=(Y_{j,n})_{j=1,2,3}:\Rcal\to\c^3\}_{n\in\N}.$ In addition, we can choose $Y_{3,n}=X_3$ $\forall n\in\N$ provided that $X_3$ extends holomorphically to $\Rcal$ and $dX_3$ never vanishes on $C_S.$
\end{lemma}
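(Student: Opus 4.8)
The plan is to reduce the approximation problem on the admissible set $S$ to a combination of (i) classical Runge–Mergelyan-type approximation for the holomorphic 1-forms determining $X$ on the bordered region $M_S$, and (ii) a careful extension/deformation argument along the arcs of $C_S$, and then to correct the resulting approximant so that the period conditions (exactness) and the null condition $\curvo{dX,dX}=0$, $\doble{dX,dX}\neq 0$ are both preserved. Since $X$ is a generalized null curve, on $M_S$ the 1-form $\Phi=dX$ is holomorphic with $\curvo{\Phi,\Phi}=0$, and along the Jordan arcs of $C_S$ it is a smooth $(1,0)$-form satisfying the same algebraic constraints; by the admissibility condition each such arc extends analytically into $\Rcal$, so $S$ sits inside an ambient analytic graph to which the usual approximation machinery applies.

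First I would pass to a Weierstrass-type representation: locally write the null 1-form as $\Phi=\big(\tfrac12(1-g^2)\eta,\,\tfrac{\imath}{2}(1+g^2)\eta,\,g\,\eta\big)$ for a meromorphic function $g$ and a 1-form $\eta$; this automatically encodes $\curvo{\Phi,\Phi}=0$, and reduces the problem to approximating the scalar data $(g,\eta)$ while keeping $\doble{\Phi,\Phi}\neq 0$ (an open condition, hence stable under small $\Ccal^1$-perturbations on the compact $S$). Using that $S$ is Runge in $\Rcal$, Mergelyan's theorem for open Riemann surfaces gives holomorphic approximants $g_n$ on $\Rcal$ of $g$ and holomorphic 1-forms approximating $\eta$, uniformly in $\Ccal^1$ on $S$; the handling of the arcs $C_S$ uses precisely the analytic-extension clause in the definition of admissibility, so that approximation along a neighborhood of each arc is genuine holomorphic (not merely smooth) approximation. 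This produces null 1-forms $\Phi_n$ on $\Rcal$ close to $dX$ in $\Ccal^1$ on $S$, but a priori not exact.

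Next I would kill the periods. The period map $\mathcal{P}(\Phi)=\big(\int_{\gamma_1}\Phi,\dots,\int_{\gamma_\ell}\Phi\big)\in(\c^3)^\ell$ over a homology basis $\{\gamma_i\}$ of $S$ (equivalently of $\Rcal$, by the Runge/tubular-neighborhood hypothesis) vanishes on $dX$; to restore this after approximation I would add a small correction of the form $\Phi_n\mapsto \Phi_n+\sum_i c_{i}\,\theta_i$ living in the kernel of the null quadratic form's differential, where $\{\theta_i\}$ are holomorphic 1-forms with prescribed period matrix and the coefficients $c_i$ are chosen small by an implicit-function/inverse-function argument. The standard way to have enough freedom is to spray $\Phi_n$ by composing with a loop of elements of $\Ocal(3,\c)$ (cf. Remark \ref{rem:AF}), whose derivative at the identity surjects onto the period obstruction space; this is exactly the point where the flexibility of null curves (as opposed to, say, arbitrary holomorphic maps) is used. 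Once the periods vanish one integrates to get genuine null curves $Y_n:\Rcal\to\c^3$ with $Y_n\to X$ in $\Ccal^1$ on $S$.

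Finally, for the addendum with $Y_{3,n}=X_3$: if $X_3$ is already holomorphic on $\Rcal$ with $dX_3\neq 0$ on $C_S$, then in the Weierstrass data one may take $g\,\eta=dX_3$ fixed and only approximate $g$ (equivalently $\eta=dX_3/g$), so the third component is left untouched throughout, and the nonvanishing of $dX_3$ on the arcs guarantees that $\eta$ has no spurious zeros there; the period correction is then carried out only in the first two components, using rotations in $\Ocal(3,\c)$ fixing the third coordinate axis. The main obstacle I expect is the period-killing step done simultaneously with preserving both quadratic conditions while not moving $X_3$: one must exhibit a correction family with enough parameters, transverse to the period obstruction, that respects all constraints — this is the genuinely non-formal part, and it is where the cited works \cite{AL,AL2} do the heavy lifting.
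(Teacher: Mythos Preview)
The paper does not prove this lemma: it is quoted from \cite{AL,AL2} and stated without proof as a ``key tool,'' so there is no in-paper argument to compare against. Your sketch is, however, a faithful outline of the strategy actually used in those references: pass to Weierstrass data to encode the null quadratic constraint, apply a Mergelyan-type theorem on the admissible set $S$ (the analytic-extension clause in the definition of admissibility is exactly what makes this work along $C_S$), and then correct periods by a finite-dimensional transversality/implicit-function argument exploiting the $\Ocal(3,\c)$-action; the addendum is handled by freezing $g\eta=dX_3$ and perturbing only within $\curvo{(0,0,1)}^{\perp}$. The one point you should be more careful about is that the period-correction spray must be built so that it simultaneously (a) stays null, (b) leaves $X_3$ fixed in the addendum case, and (c) has period map of full rank at the origin --- in \cite{AL,AL2} this is done not by abstract $\Ocal(3,\c)$-rotations but by inserting explicit multiplicative correction factors (López--Ros type deformations) supported near chosen points of $S$, which gives the required surjectivity directly; your invocation of ``loops in $\Ocal(3,\c)$'' is morally right but would need to be made concrete to actually close the argument.
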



\section{Completeness Lemma}\label{sec:CL}

In this section we state and prove the technical result which is the kernel of this paper (Lemma \ref{lem:CL} below). Its proof  requires of the approximation result by null curves in Lemma \ref{lem:runge}. Lemma \ref{lem:CL} has more strength than similar results in \cite{MN,Al}, and its proof presents some differences.

\begin{lemma}\label{lem:CL}
Let $M,$ $N\in\Bsf(\Rcal)$ with $N<M,$ let $\Tsf$ be a metric tubular neighborhood of $\partial M$ in $\Rcal$ and denote by $\Pgot:\Tsf\to\partial M$ the orthogonal projection. 
Consider $X\in\Nsf(M),$ an analytical map $\Fgot:\partial M\to\c^3,$ and $\mu>0$ so that
\begin{equation}\label{eq:CL}
\|X-\Fgot\|<\mu\quad\text{on }\partial M.
\end{equation}

Then, for any $\rho>0$ and $\epsilon>0,$ there exist $\Mcal\in\Bsf(\Rcal)$ and $\Xcal\in\Nsf(\Mcal)$ such that
\begin{enumerate}[{\rm (\ref{lem:CL}.a)}]
\item $N<\Mcal<M$ and $\partial \Mcal\subset\Tsf$, 
\item $\Xcal|_{\partial\Mcal}:\partial\Mcal\to\c^3$ is an embedding,
\item $\rho<\dist_{(\Mcal,\Xcal)}(N,\partial\Mcal),$
\item $\|\Xcal-X\|_1<\epsilon$ on $N,$ and
\item $\|\Xcal-\Fgot\circ\Pgot\|<\sqrt{4\rho^2+\mu^2}+\epsilon$ on $\partial\Mcal.$
\end{enumerate}
\end{lemma}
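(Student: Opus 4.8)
The plan is to realize $\Xcal$ as a single application of the Approximation Lemma~\ref{lem:runge} to a carefully constructed generalized null curve on an admissible set $S$ that has $\Mcal$ (roughly) as its ``body'' $M_S$ and an added labyrinth of arcs in the collar $M-N$ as its $C_S$ part. The geometric heart of the argument is, as in \cite{MN,Al}, a Nadirashvili-type labyrinth argument that inflates the intrinsic distance from $N$ to $\partial\Mcal$ past $\rho$ while moving the image in $\c^3$ by a controlled amount, and the quantitative control in (\ref{lem:CL}.e) is exactly what keeps the boundary values near $\Fgot\circ\Pgot$.

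\textbf{Step 1: Set up the labyrinth.} Choose $\Mcal_0\in\Bsf(\Rcal)$ with $N<\Mcal_0<M$ and $\partial\Mcal_0\subset\Tsf$, very close to $\partial M$ so that $\|X-\Fgot\circ\Pgot\|$ is still $<\mu$ on $\partial\Mcal_0$ by continuity. Pick a large integer $k$ (to be fixed at the end) and, on each annular component of $M-\Mcal_0$, construct the standard Nadirashvili labyrinth: a finite collection of pairwise disjoint analytic arcs together with a slightly shrunk core region, arranged so that any path in the complement joining the inner boundary ($\partial\Mcal_0$) to the outer boundary ($\partial M$) that is ``short'' with respect to the background metric $\omega$ must cross the arcs many times. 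Let $S$ be the union of a compact region $M_S$ (a bordered region with $N<M_S<\Mcal_0$, together with thin neighborhoods used only in the admissibility bookkeeping) and the labyrinth arcs $C_S$; arrange $S$ to be admissible and Runge, and let $\Mcal\in\Bsf(\Rcal)$ be a bordered region slightly thickening $S$, so that $N<\Mcal<M$, $\partial\Mcal\subset\Tsf$, and $\partial\Mcal$ consists of curves running alongside the outer ends of the labyrinth. This gives (\ref{lem:CL}.a).

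\textbf{Step 2: Deform $X$ along the labyrinth.} Starting from $X\in\Nsf(M)$, build a generalized null curve $\widetilde X:S\to\c^3$ that agrees with $X$ (up to a $\Ccal^1$-small perturbation) on $M_S$ and whose derivative is drastically enlarged, in suitable directions, along the arcs of $C_S$. Concretely, on each arc one modifies the Weierstrass-type data (or directly $dX$) by composing with a path of elements of $\Ocal(3,\c)$ of large norm, spinning through a $\curvo{\cdot,\cdot}$-conjugate frame so that the $X^*\esca{\cdot,\cdot}$-length of each arc becomes huge while, crucially, the \emph{extrinsic} displacement of $\widetilde X$ along and across the labyrinth stays small: the arcs are short in $\omega$, and the enlargement is arranged so the net change of $\widetilde X$ between entering and leaving the labyrinth region is $O(\rho)$ in $\c^3$ (this is the source of the $\sqrt{4\rho^2+\mu^2}$ rather than $\sqrt{\rho^2+\mu^2}$ — one picks up a $2\rho$-type term from the two ``legs'' of the back-and-forth labyrinth path). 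One also imposes that $\widetilde X|_{\partial\Mcal}$ be an embedding, which is generic after a further tiny perturbation. Check $\curvo{d\widetilde X,d\widetilde X}=0$ and $\doble{d\widetilde X,d\widetilde X}\neq0$ hold throughout, so $\widetilde X$ is a genuine generalized null curve on $S$.

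\textbf{Step 3: Approximate and conclude.} Apply Lemma~\ref{lem:runge} to $S$ and $\widetilde X$ to get a null curve $Y_n:\Rcal\to\c^3$ with $\|Y_n-\widetilde X\|_1$ as small as we like on $S$; set $\Xcal=Y_n|_{\Mcal}\in\Nsf(\Mcal)$ for $n$ large. On $N\subset M_S$, $\Ccal^1$-closeness of $\Xcal$ to $\widetilde X\approx X$ gives (\ref{lem:CL}.d) directly. For (\ref{lem:CL}.c): any path in $\Mcal$ from $N$ to $\partial\Mcal$ is either long in the background metric $\omega$ (hence, since $\Xcal$ is $\Ccal^1$-close to $X$ which induces a metric comparable to a fixed multiple of $\omega$ on the core, it is long in $X^*\esca{\cdot,\cdot}$ too — here one needs the core estimate $X^*\esca{\cdot,\cdot}\geq c\,\omega$ on a neighborhood, obtained by compactness) or else short in $\omega$, in which case it crosses enough labyrinth arcs that the enlarged metric forces its $\Xcal^*\esca{\cdot,\cdot}$-length past $\rho$; choosing $k$ large at the outset makes this work. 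Finally (\ref{lem:CL}.b) and (\ref{lem:CL}.e) pass from $\widetilde X$ to $\Xcal$ by the $\Ccal^0$-approximation, absorbing the error into the $\epsilon$; the estimate $\|\widetilde X-\Fgot\circ\Pgot\|\leq\|\widetilde X-X\|+\|X-\Fgot\circ\Pgot\|<(2\rho+o(1))+\mu$ on $\partial\Mcal$, squared appropriately along the two transverse directions of the collar, yields the bound $\sqrt{4\rho^2+\mu^2}+\epsilon$.

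\textbf{Main obstacle.} The delicate point is Step 2: simultaneously (i) blowing up the intrinsic length of the labyrinth arcs enough to beat $\rho$, (ii) keeping the extrinsic motion of $\widetilde X$ across the whole collar bounded by essentially $2\rho$ (so that (\ref{lem:CL}.e) holds with the stated constant and so the curve does not ``escape''), and (iii) maintaining the null condition $\curvo{d\widetilde X,d\widetilde X}=0$ exactly while doing so. Balancing (i) against (ii) is what dictates the precise form $\sqrt{4\rho^2+\mu^2}$ and requires the interplay between the $\Ccal^0$-smallness of the arcs in $\omega$ and the $\Ocal(3,\c)$-enlargement, exactly the mechanism in which Lemma~\ref{lem:CL} improves on the earlier results of \cite{MN,Al}.
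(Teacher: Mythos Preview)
Your outline has a genuine gap at the most delicate point, namely item (\ref{lem:CL}.e). You write
\[
\|\widetilde X-\Fgot\circ\Pgot\|\leq\|\widetilde X-X\|+\|X-\Fgot\circ\Pgot\|<(2\rho+o(1))+\mu
\]
and then assert that ``squared appropriately along the two transverse directions of the collar'' this yields $\sqrt{4\rho^2+\mu^2}+\epsilon$. But $\sqrt{4\rho^2+\mu^2}<2\rho+\mu$ strictly, so the triangle inequality alone can never produce it; and the ``two transverse directions of the collar'' live in the domain $\Rcal$, not in $\c^3$, so they give no orthogonality of the two summands above. Nothing in your labyrinth construction forces $\widetilde X-X$ and $X-\Fgot\circ\Pgot$ to be (approximately) Hermitian-orthogonal in $\c^3$, and without that the Pythagorean bound simply fails.

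The paper obtains this orthogonality by design. On each small patch $\Omega_{\eta(k)}$ of the collar one first picks a unit vector $e_{\eta(k)}\in\s^5-\Theta$ essentially parallel to $X-\Fgot\circ\Pgot$ there (equation \eqref{eq:eij}), and then performs the large deformation on that patch \emph{inside the hyperplane} $\doble{e_{\eta(k)}}^\bot$ (property (F.2$_k$): $\doble{H_k-H_{k-1},e_{\eta(k)}}=0$). This is done via the $\curvo{\cdot,\cdot}$-conjugate frame $\{u_{\eta(k)},v_{\eta(k)},w_{\eta(k)}\}$ and the matrix $\Acal_{\eta(k)}$, using the freedom in Lemma~\ref{lem:runge} to keep a chosen third coordinate fixed. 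Only then does the Pythagorean split in \eqref{eq:pita1}--\eqref{eq:pita3} go through. A generic Nadirashvili labyrinth, where one just spins through $\Ocal(3,\c)$ to inflate length, does not respect any prescribed hyperplane and will not give (\ref{lem:CL}.e) with the stated constant.

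Two further structural points. First, the paper does \emph{not} use a labyrinth for (\ref{lem:CL}.c); it uses a two-stage process: radial arcs $r_{i,j}$ on which a coordinate has large variation (Claim~\ref{cl:antena}), followed by pushing the image far away on discs $K_{i,j}$ (Claim~\ref{cl:2a}). Any curve from $N$ to $\partial M$ then either hits some $K_{\eta(k)}$ (and is long because the endpoints are extrinsically far apart) or threads a tube $\tilde r_{i,j}$ (and is long by the coordinate estimate). Second, $\Mcal$ is not fixed at the start as a thickening of $S$; it is chosen at the very end as a level region with $\rho<\dist_{(M,\hat X)}(M_0,\cdot)<2\rho$ on $\partial\Mcal$ (items (G.1)--(G.2)). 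The upper bound $2\rho$ is precisely what feeds into \eqref{eq:pita3} and makes the $\sqrt{4\rho^2+\mu^2}$ estimate possible on $\partial\Mcal$.
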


Roughly speaking, the above Lemma asserts that a compact null curve $X(M)$ in $\c^3$ can be perturbed into another compact null curve $\Xcal(\Mcal)$ with larger intrinsic radious. This deformation hardly modifies the null curve in a prescribed compact set and, in addition, the effect of the deformation in the boundaries of the null curves is quite controlled. The bounds $\mu,$ $\rho$ and $\sqrt{4\rho^2+\mu^2}+\epsilon$ in \eqref{eq:CL} and Items (\ref{lem:CL}.c) and (\ref{lem:CL}.e) follow the spirit of Nadirashvili's original construction \cite{Na} (see \eqref{eq:pitag} below).

The null curve $\Xcal$ which proves the lemma will be obtained from $X$ after two different perturbation processes. In the first one, which is enclosed in Claim \ref{cl:antena} below, the effect of the deformation is strong over a family of Jordan arcs in $M$ (see the definition of the arcs $r_{i,j}$ in Items (C.1), (C.2) and (C.3)) and slight on a compact region containing $N.$ In the second stage, see Claim \ref{cl:2a}, the deformation mainly acts on a family of compact discs (see the definition of $K_{i,j}$ in (E.2)) and hardly works on a compact region containing $N.$ 

\subsection{Proof of Lemma \ref{lem:CL}}
Take $\epsilon_0>0$ to be specified later.

By \eqref{eq:CL} and a continuity argument, for any $P\in \partial M$ there exists a simply connected open neighborhood $V_P$ of $P$ in $M\cap \Tsf$ such that
\begin{enumerate}[{\rm ({A}.1)}]
\item $\Pgot(Q)\in V_P\cap \partial M,$ $\forall Q\in V_P,$
\item $\max\left\{\|X(Q_1)-X(Q_2)\|\,,\,\|\Fgot(\Pgot(Q_1))-\Fgot(\Pgot(Q_2))\|\right\}<\epsilon_0,$ $\forall \{Q_1,Q_2\}\subset V_P,$ and
\item $\|X-\Fgot\circ\Pgot\|<\mu$ on $V_P.$
\end{enumerate}

Set $\Vcal=\{V_P\}_{P\in\partial M},$ and observe that $\Vcal\cap \partial M:=\{V_P\cap \partial M\}_{P\in\partial M}$ is an open covering of $\partial M.$ Take $M_0\in\Bsf(\Rcal)$ satisfying that
\begin{equation}\label{eq:M_0} 
N<M_0<M\quad\text{and}\quad M-M_0^\circ \subset \bigcup_{P\in\partial M}V_P,
\end{equation}
and note that $\Vcal\cap (M-M_0^\circ):=\{V_P\cap (M-M_0^\circ)\}_{P\in\partial M}$ is an open covering of $M-M_0^\circ$ as well.  
One has that $M-M_0^\circ=\cup_{i=1}^\esf A_i,$ where $\{A_i\}_{i=1}^\esf$ are pairwise disjoint compact annuli. Write $\alpha_i\subset\partial M_0$ and $\beta_i\subset\partial M$ for the two components of  $\partial A_i,$  $i=1,\ldots,\esf.$ For each $m\in\n,$ let $\z_m$ denote the additive cyclic group of integers modulus $m.$ Since $\Vcal\cap (M-M_0^\circ)$ is an open covering of $A_i,$ then there exist $\mgot\in\n,$ $\mgot\geq 3,$ and a collection of compact Jordan arcs $\{\alpha_{i,j}\;|\; (i,j)\in \{1,\dots,\esf\}\times\z_\mgot\}$ satisfying that
\begin{enumerate}[{\rm ({B}.1)}]
\item $\cup_{j\in\z_\mgot} \alpha_{i,j}=\alpha_i,$
\item $\alpha_{i,j}$ and $\alpha_{i,j+1}$ have a common endpoint $Q_{i,j}$ and are otherwise disjoint for all $j\in\z_\mgot,$ and
\item $\alpha_{i,j}\cup\alpha_{i,j+1}\subset V_{i,j}\in\Vcal$ for all $j\in\z_\mgot,$
\end{enumerate}
$\forall i\in\{1,\ldots,\esf\}.$

For any $(i,j)\in\{1,\dots,\esf\}\times\z_\mgot$ fix $P_{i,j}\in V_{i,j-1}\cap V_{i,j}$ and $e_{i,j}\in\s^5-\Theta$ such that
\begin{equation}\label{eq:eij}
\left\|\Pi_{i,j}\big(X(P_{i,j})-\Fgot(\Pgot(P_{i,j}))\big)\right\|<\epsilon_0,
\end{equation}
where $\Pi_{i,j}:\c^3\to\doble{e_{i,j}}^\bot$ is the orthogonal projection. This choice is possible since $\s^5-\Theta$ is dense in $\s^5.$
Label 
\[
w_{i,j}=\frac{\overline{e_{i,j}}}{\curvo{\overline{e_{i,j}},\overline{e_{i,j}}}},
\]
for all $i,j,$ and notice that 
\begin{equation}\label{eq:orto}
\doble{e_{i,j}}^\bot=\curvo{w_{i,j}}^\bot.
\end{equation}

Since $w_{i,j}\notin\Theta,$ then we can take $u_{i,j},$ $v_{i,j} \in \curvo{w_{i,j}}^\bot$ such that  $\{u_{i,j},v_{i,j},w_{i,j}\}$ is a $\curvo{\cdot,\cdot}$-conjugate basis of $\c^3.$  Denote by $\Acal_{i,j}$ the complex orthogonal matrix $(u_{i,j},v_{i,j},w_{i,j})^{-1},$ 
for all $i\in \{1,\ldots,\esf\}$ and $j\in \z_\mgot.$ 

Let $\{r_{i,j}\;|\;j\in\z_\mgot\}$ be a family of pairwise disjoint analytical compact Jordan arcs in $A_i$ such that
\begin{enumerate}[{\rm ({C}.1)}]
\item $r_{i,j}\subset V_{i,j-1}\cap V_{i,j}\cap V_{i,j+1}$ (see (A.1), (B.2) and (B.3)),
\item $r_{i,j}$ has initial point $Q_{i,j},$ final point $\Pgot(Q_{i,j})$ and it is otherwise disjoint from $\alpha_i\cup\beta_i,$ and
\item $S=M_0\cup(\cup_{i,j}r_{i,j})$ is admissible. 
\end{enumerate} 

As we announced above, the null curve $\Xcal$ will be obtained from $X$ after two deformation procedures. The first one strongly works in $\cup_{i,j}r_{i,j}$ (see property (D.2) below) and is enclosed in the following

\begin{claim}\label{cl:antena}
There exists $H\in{\sf N}(M)$ such that, for any $(i,j) \in \{1,\ldots,\esf\} \times \z_\mgot,$
\begin{enumerate}[{\rm ({D}.1)}]
\item $\|H-X\|<\epsilon_0$ on $S,$
\item if $J \subset r_{i,j}$ is a Borel measurable subset, then 
\begin{multline*} \label{eq:comple2}
\min\{\Lcal_\c((\Acal_{i,j} \cdot H|_{J})_3),\Lcal_\c((\Acal_{i,j+1} \cdot H|_{J})_3)\}+\\ \min\{\Lcal_\c((\Acal_{i,j} \cdot H|_{r_{i,j}-J})_3),\Lcal_\c((\Acal_{i,j+1} \cdot H|_{r_{i,j}-J})_3)\} > 2\rho\cdot\max\{\|\Acal_{i,j}\|,\|\Acal_{i,{j+1}}\|\}, 
\end{multline*}
where $(\,\cdot\,)_3$ means third (complex) coordinate and $\Lcal_\c(\cdot)$ Euclidean length in $\c,$ and
\item $\|H-X\|_1<\epsilon_0$ on $M_0.$
\end{enumerate}
\end{claim}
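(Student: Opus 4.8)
\textbf{Proof plan for Claim \ref{cl:antena}.}
The idea is to realize $H$ as a small $\Ccal^1$-perturbation of $X$ obtained by adding, along each arc $r_{i,j}$, a highly oscillating null direction whose third coordinate (in both adapted frames $\Acal_{i,j}$ and $\Acal_{i,j+1}$) has huge Euclidean length in $\c$, while the effect on $M_0$ is negligible. First I would pass to the admissible set $S=M_0\cup(\bigcup_{i,j}r_{i,j})$ from (C.3) and build a \emph{generalized} null curve $\widetilde H:S\to\c^3$ with $\|\widetilde H-X\|_1$ arbitrarily small on $M_0$, but with the property (D.2) holding with a strict margin on each $r_{i,j}$. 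The model to keep in mind is Nadirashvili-type spiralling: on each arc $r_{i,j}$ parametrize by $x\in[0,1]$ and prescribe $d\widetilde H$ so that, in the frame $\Acal_{i,j}$, the first two coordinates of $\Acal_{i,j}\cdot d\widetilde H$ wind rapidly (with large modulus) while the third coordinate $\big(\Acal_{i,j}\cdot d\widetilde H\big)_3$ is chosen to satisfy the nullity relation $\curvo{d\widetilde H,d\widetilde H}=0$ and simultaneously to have total length $>2\rho\,\max\{\|\Acal_{i,j}\|,\|\Acal_{i,j+1}\|\}$ even after restricting to \emph{any} Borel subset $J$ and its complement. The two-frame bound in (D.2) is handled by noting that $\Acal_{i,j}$ and $\Acal_{i,j+1}$ are fixed invertible matrices, so one can arrange the winding to be ``large in every direction'' — concretely, make $\|\Acal_{i,j}\cdot d\widetilde H\|$ and $\|\Acal_{i,j+1}\cdot d\widetilde H\|$ both enormous on $r_{i,j}$, which forces the third-coordinate lengths to be large too after one checks that a null vector of large norm cannot have all three coordinates small; the $\min$ over $J$, $r_{i,j}-J$ then follows because the construction keeps the length density uniformly large along the whole arc, so any splitting still leaves both pieces long.

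The second step is to apply the Approximation Lemma \ref{lem:runge} to $\widetilde H$ on the connected admissible set $S$ (connectedness is available after, if necessary, joining the components by auxiliary arcs inside $M_0$, or by treating each annular piece and patching — but since $M_0$ is connected and the $r_{i,j}$ all meet $\partial M_0$, $S$ is already connected), obtaining null curves $Y_n:\Rcal\to\c^3$ with $\|Y_n-\widetilde H\|_1\to 0$ uniformly on $S$. For $n$ large set $H=Y_n$. Then (D.1) follows from $\Ccal^0$-closeness on $S$ and $\|X-\widetilde H\|<\epsilon_0$ there (the latter built in during step one); (D.3) follows from $\Ccal^1$-closeness on $M_0$ together with $\|\widetilde H-X\|_1$ small on $M_0$; and (D.2) is a \emph{stable} open condition — the left-hand side is a continuous functional of the $\Ccal^1$-jet of the map on each $r_{i,j}$ (lengths of coordinates of a $1$-form depend continuously on the $\Ccal^0$-norm of that $1$-form, i.e. on the $\Ccal^1$-norm of the map), so a strict inequality for $\widetilde H$ persists for $H=Y_n$ once $n$ is large enough. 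One uniform choice of $n$ works for the finitely many pairs $(i,j)$.

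The main obstacle I anticipate is step one: designing, on each analytic arc $r_{i,j}$, an explicit smooth $\c^3$-valued function whose differential is null ($\curvo{dH,dH}=0$), is $\Ccal^1$-close to $dX$ at the endpoint that lies on $\partial M_0$ (so that the pieces glue to a genuine \emph{generalized} null curve on $S$ in the sense of Definition \ref{def:gen-null}, with $H|_{M_S}\in\Nsf(M_S)$ and $\doble{dH,dH}$ nonvanishing), and yet whose third coordinate in \emph{two} different orthonormal-type frames has prescribed large length with the Borel-subset-robust lower bound of (D.2). The cleanest route is the standard null-curve device: write $dH = \big(\tfrac12(1/g - g),\tfrac{\imath}{2}(1/g+g),1\big)\eta$ in a local Weierstrass-type representation adapted to the frame, and drive $g$ around small circles of rapidly varying radius so that $|1/g-g|$ becomes large; then rotate by $\Acal_{i,j}$, $\Acal_{i,j+1}$ and use that a null $1$-form of large total length must have large length in each coordinate of any fixed conjugate basis (because the matrices are invertible and map $\Theta$ to $\Theta$). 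The periodicity/exactness of $dH$ along the (non-closed) arcs is free since arcs are contractible; matching at the shared endpoints $Q_{i,j}$ of $\alpha_{i,j}$ and the interplay with (C.2) only requires that $H$ agree there with $X$ up to first order, which a cut-off in the oscillation amplitude near the endpoints arranges. The bookkeeping — keeping $\epsilon_0$-smallness on $M_0$ while pushing the arc-lengths past $2\rho\max\{\|\Acal_{i,j}\|,\|\Acal_{i,j+1}\|\}$ — is where the care lies, but it is exactly the mechanism of \cite{MN,Al} transplanted to the null setting via Lemma \ref{lem:runge}.
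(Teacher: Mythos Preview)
Your overall architecture matches the paper's: build a generalized null curve $\widetilde H$ on the admissible set $S=M_0\cup\bigl(\bigcup_{i,j}r_{i,j}\bigr)$ that agrees with $X$ on $M_0$ and oscillates along each $r_{i,j}$, then apply Lemma~\ref{lem:runge}; you are also right that (D.2) is an open condition in the $\Ccal^1$-topology and hence survives the approximation.

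There is, however, a genuine gap in how you obtain (D.2) for $\widetilde H$. You assert that ``a null $1$-form of large total length must have large length in each coordinate of any fixed conjugate basis (because the matrices are invertible and map $\Theta$ to $\Theta$)''. This is false: $(1,\imath,0)\in\Theta$ has vanishing third coordinate, and any multiple of it yields a null $1$-form of arbitrarily large total length whose third coordinate is identically zero. Invertibility of $\Acal_{i,j}$ only says the image is again null; it does not keep your oscillation direction out of the proper subvariety $\{v\in\Theta:(\Acal_{i,j}v)_3=0\}$ or $\{v\in\Theta:(\Acal_{i,j+1}v)_3=0\}$. So ``enormous $\|\Acal_{i,k}\cdot d\widetilde H\|$'' does \emph{not} force large $\Lcal_\c$ of the third coordinate, and your Weierstrass spiralling (which makes $|1/g-g|$ large but leaves the third slot equal to $\eta$) does not by itself control the third coordinate in \emph{two} frames at once.

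The paper's construction sidesteps this and is in fact simpler than spiralling. For each $(i,j)$ it picks a \emph{single} null vector $\lambda_{i,j}\in\Theta$ satisfying simultaneously $(\Acal_{i,j}(\lambda_{i,j}))_3\neq 0$ and $(\Acal_{i,j+1}(\lambda_{i,j}))_3\neq 0$ (a generic condition, so such $\lambda_{i,j}$ exists), scaled so that the segment $s\mapsto X(Q_{i,j})+s\lambda_{i,j}$, $s\in[0,1]$, stays in an $\epsilon_0$-ball around $X(Q_{i,j})$. Then $\widetilde H|_{r_{i,j}}$ simply traces this short segment back and forth $n$ times. The image is $\epsilon_0$-close to $X(Q_{i,j})$ (hence (D.1)), while the third-coordinate length density in either frame is the \emph{constant} $n\,|(\Acal_{i,k}(\lambda_{i,j}))_3|$ along the whole arc; thus for any Borel $J$ the left side of (D.2) equals $n\,\min_{k\in\{j,j+1\}}|(\Acal_{i,k}(\lambda_{i,j}))_3|$, independent of $J$, and one just takes $n$ large. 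Your plan becomes correct once you replace the false ``automatic'' largeness by this explicit choice of null direction.
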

\begin{proof} 
Let $r_{i,j}(u),$ $u\in [0,1],$ be a smooth parameterization of $r_{i,j}$ with $r_{i,j}(0)=Q_{i,j}$ and $r_{i,j}(1)=\Pgot_i(Q_{i,j}).$ From (A.2) and (C.1) there exists a non-empty open subset $\Xi_{i,j}\subset\c^3$ satisfying that
\begin{equation}\label{eq:peque}
X(V_{i,j-1}\cap V_{i,j})\subset \Xi_{i,j}\quad\text{and}\quad \|w_1-w_2\|<\epsilon_0\; \forall \{w_1,w_2\}\subset\Xi_{i,j}.
\end{equation}
Then (B.3), \eqref{eq:peque} and (C.1) give that
$X(r_{i,j-1} \cup \alpha_{i,j}\cup r_{i,j}\cup  \alpha_{i,j+1} \cup r_{i,j+1})\subset \Xi_{i,j}.$
Consider $\lambda_{i,j} \in \Theta$ such that $(\Acal_{i,j}(\lambda_{i,j}))_3,$ $(\Acal_{i,j+1}(\lambda_{i,j}))_3\neq 0$ and 
\begin{equation}\label{eq:a3}
\{\Lambda_{i,j}(s):=X(Q_{i,j})+s \lambda_{i,j}\,|\, s \in [0,1]\}\subset \Xi_{i,j}.
\end{equation}
Set $\Lambda_{i,j}^*(s)=\Lambda_{i,j}(1-s),$ $s \in [0,1].$ Take $n \in \n$ large enough so that 
\begin{equation}\label{eq:a2}
n\cdot \min\{|(\Acal_{i,j}(\lambda_{i,j}))_3|,|(\Acal_{i,j+1}(\lambda_{i,j}))_3|\}>2\rho\cdot\max\{\|\Acal_{i,j}\|,\|\Acal_{i,j+1}\|\}.
\end{equation}

Set $d_{i,j}:[0,1] \to \c^3,$ 
\begin{itemize}
\item $d_{i,j}(u)=\Lambda_{i,j}(n u-b+1)$ if $u \in [\frac{b-1}{n},\frac{b}{n}]$ and  $b\in \{1,\ldots,n\}$ is odd, and
\item $d_{i,j}(u)=\Lambda_{i,j}^*(n u-b+1)$ if $u \in [\frac{b-1}{n},\frac{b}{n}]$ and  $b\in \{1,\ldots,n\}$ is even.
\end{itemize}

Notice that the curves $d_{i,j}$ are continuous,  weakly differentiable and satisfy that $\curvo{d_{i,j}'(u),d_{i,j}'(u)}=0.$ Up to replacing $H|_{r_{i,j}}$ for $d_{i,j}$ for all $i,$ $j,$ items (D.1), (D.2) and (D.3) formally hold. To finish, approximate $d_{i,j}$ by a smooth curve $c_{i,j}$ matching smoothly with $X$ at $Q_{i,j},$ and so that the map $\tilde{H}:S \to \c^3$  given by $\tilde{H}|_{M}=X,$ $\tilde{H}|_{r_{i,j}}(u)=c_{i,j}(u)$ for all $u \in [0,1],$ $i$ and $j,$ is a generalized null curve satisfying all the above items. Indeed, if $c_{i,j}$ is chosen close enough to $d_{i,j},$ (D.1) follows from \eqref{eq:a3} and (D.2) from \eqref{eq:a2}. 
Apply Lemma \ref{lem:runge} to  $\tilde{H}$ and we are done. 
\end{proof}

Denote by $\Omega_{i,j}$ the closed disc in $M-M_0^\circ$ bounded by $\alpha_{i,j},$ $r_{i,j-1},$ $r_{i,j}$ and a piece, named $\beta_{i,j},$ of $\beta_i$ connecting $\Pgot(Q_{i,j-1})$ and $\Pgot(Q_{i,j}).$ Since $V_{i,j}$ is simply connected, then (A.1), (B.3) and (C.1) give that
\begin{equation}\label{eq:Oin}
\Omega_{i,j}\cup\Omega_{i,j+1}\subset V_{i,j}.
\end{equation}
Consider simply-connected compact neighborhoods $\tilde{\alpha}_{i,j}$  and $\tilde{r}_{i,j}$ in $M-M_0^\circ$ of $\alpha_{i,j}$ and $r_{i,j},$ respectively,  $i=1,\ldots,\esf,$ $j \in \z_\mgot,$  satisfying the following properties:
\begin{enumerate}[({E}.1)]
\item $\tilde{\alpha}_{i,j}\cap \tilde{r}_{i,a} =\emptyset,$ $a \neq j-1, j,$ and  $\tilde{r}_{i,j} \cap \tilde{r}_{i,a}=\emptyset,$ $a\neq j,$
\item $K_{i,j}:=\overline{\Omega_{i,j}-(\tilde{r}_{i,j-1}\cup \tilde{\alpha}_{i,j}\cup \tilde{r}_{i,j})}$ is a compact disc and $K_{i,j} \cap \beta_{i,j}$ is a Jordan arc disjoint from the set $\{\Pgot(Q_{i,j-1}) , \Pgot(Q_{i,j})\}$ (see Figure \ref{fig:Omega}),
\item  $\|H-X\|<\epsilon_0$ on $\overline{M-\cup_{i,j}K_{i,j}},$ and

\item if $J \subset \tilde{r}_{i,j}$ is an arc connecting $\tilde{\alpha}_{i,j}\cup \tilde{\alpha}_{i,j+1}$ and $\beta_{i,j}\cup\beta_{i,j+1},$  $J_1=J \cap \Omega_{i,j}$ and $J_2=J \cap \Omega_{i,j+1},$ then 
\[
\Lcal_\c((\Acal_{i,j} \cdot H|_{J_1})_3)+\Lcal_\c((\Acal_{i,j+1} \cdot H|_{J_2})_3) > 2\rho \cdot \max\{\|\Acal_{i,j}\|,\|\Acal_{i,j+1}\|\}.
\]
\end{enumerate}
\begin{figure}[ht]
    \begin{center}
    \scalebox{0.50}{\includegraphics{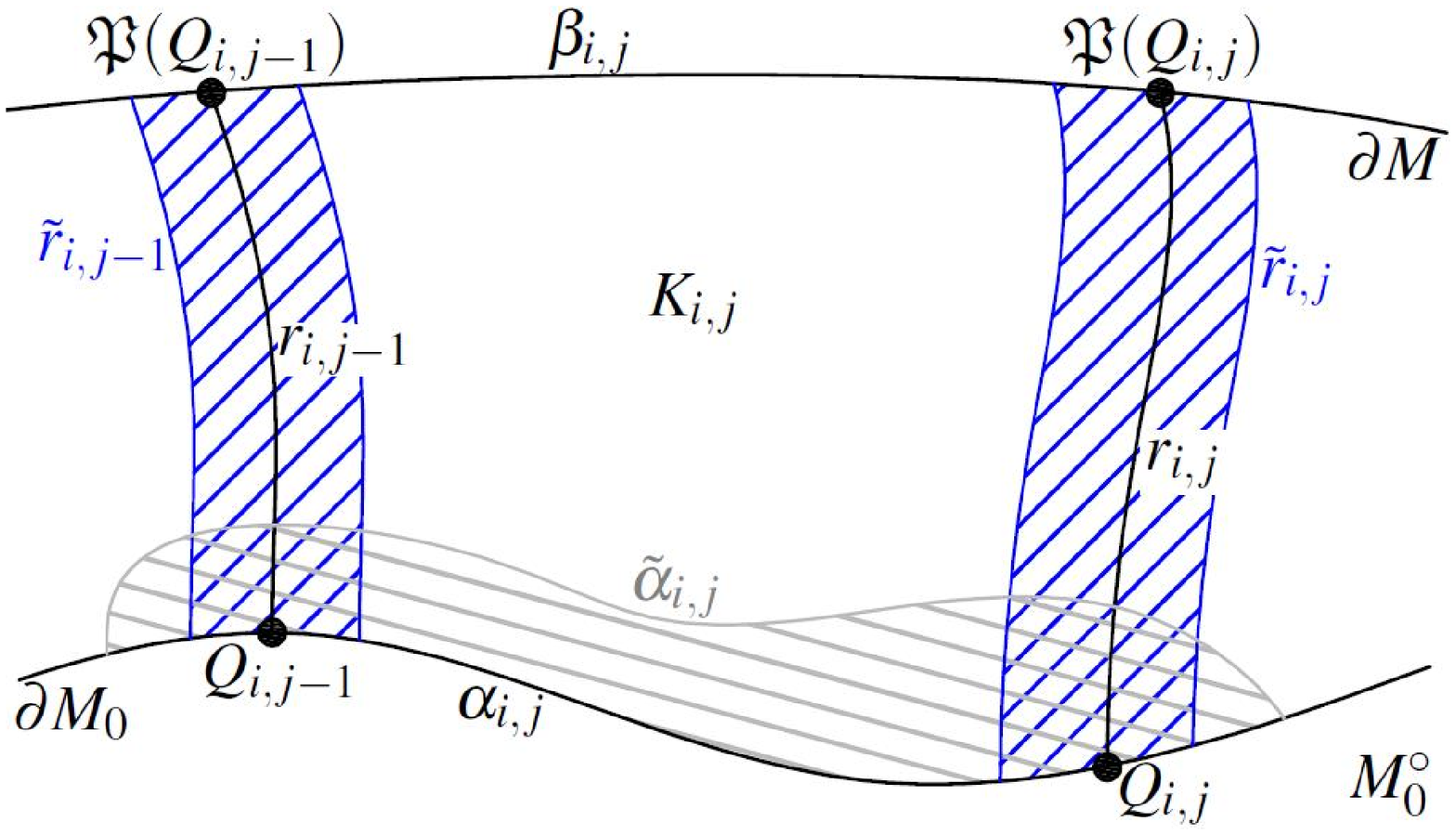}}
        \end{center}
        \vspace{-0.5cm}
\caption{$\Omega_{i,j}.$}\label{fig:Omega}
\end{figure}

The existence of such compact discs follows from a continuity argument. In order to achieve properties (E.3) and (E.4) take into account  Claim \ref{cl:antena}.

Let $\eta:\{1,\ldots,\esf\mgot\}\to\{1,\ldots,\esf\}\times \z_\mgot$ be the bijection $\eta(k)=(E(\frac{k-1}{\mgot})+1,k-1),$ where $E(\cdot)$ means integer part. 

The second deformation process in the proof of Lemma \ref{lem:CL} mainly acts in $\cup_{k}K_{\eta(k)}$ and is contained in the following claim.

\begin{claim}\label{cl:2a}
There exists a sequence $\{H_0=H, H_1,\ldots, H_{\esf\mgot}\}\subset\Nsf(M)$ satisfying the following properties:
\begin{enumerate}[\rm ({F}.1{$_k$})]
\item $\|H_k-X\|<\epsilon_0$ on $\overline{M- \big((\cup_{a=1}^k \Omega_{\eta(a)})\cup (\cup_{a=1}^{\esf\mgot}K_{\eta(a)})\big)},$

\item $\doble{H_k-H_{k-1},e_{\eta(k)}}=0,$

\item $\|H_k(Q)-X(Q)\|>2\rho+1,$ $\forall Q\in K_{\eta(a)},$ $\forall a\in\{1,\ldots, k\},$ $k\geq 1,$

\item if $J \subset \tilde{r}_{\eta(a)}$ is an arc connecting $\tilde{\alpha}_{\eta(a)}\cup \tilde{\alpha}_{\eta(a)+(0,1)}$ and $\beta_{\eta(a)}\cup\beta_{\eta(a)+(0,1)},$  $J_1 =J \cap \Omega_{\eta(a)}$ and $J_2 =J \cap \Omega_{\eta(a)+(0,1)},$ then
\[
\Lcal_\c((\Acal_{\eta(a)} \cdot H_k|_{J_1})_3)+ \Lcal_\c((\Acal_{\eta(a)+(0,1)} \cdot H_k|_{J_2})_3) > 2\rho\cdot\max\{\|\Acal_{\eta(a)}\|,\|\Acal_{\eta(a)+(0,1)}\|\},
\]
$\forall a\in \{1,\ldots, \esf\mgot\},$
\item $\|H_k-H_{k-1}\|<\epsilon_0/\esf\mgot$ on $\overline{M-\Omega_{\eta(k)}},$ and
\item $\|H_k-X\|_1<\epsilon_0$ on $M_0.$
\end{enumerate}
\end{claim}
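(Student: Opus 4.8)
\textbf{Proof proposal for Claim \ref{cl:2a}.}

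The plan is to construct the sequence $\{H_k\}$ recursively, producing $H_k$ from $H_{k-1}$ by a single application of the Approximation Lemma \ref{lem:runge} preceded by an explicit ``spike'' deformation supported near the disc $K_{\eta(k)}$. Suppose $H_{k-1}$ has been built. The key point is that the deformation at step $k$ should push $H_{k-1}$ a long Euclidean distance (more than $2\rho+1$) on $K_{\eta(k)}$, while moving only in the direction $e_{\eta(k)}$ so that (F.2$_k$) holds automatically, and while remaining $\epsilon_0/(\esf\mgot)$-small outside $\Omega_{\eta(k)}$. Concretely, I would pick a null vector $\sigma_k\in\Theta$ lying in the complex line $\doble{e_{\eta(k)}}^\bot$ — this is possible because $\doble{e_{\eta(k)}}^\bot$ is a two-dimensional complex subspace of $\c^3$ and hence meets the null quadric $\{\curvo{v,v}=0\}$ in (at least) a pair of lines — scaled so that $\|\sigma_k\|$ is comparable to $\rho$. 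Then I take an admissible set $S_k$ obtained from a compact region $\approx M_0\cup(\bigcup_{i,j}\tilde r_{i,j})$ together with a Jordan arc $\ell_k$ threading through $K_{\eta(k)}$, define a generalized null curve that equals $H_{k-1}$ off a neighbourhood of $K_{\eta(k)}$ and, along $\ell_k$, zig-zags back and forth in the direction $\sigma_k$ enough times (as in the construction of the $d_{i,j}$ in Claim \ref{cl:antena}) to accumulate Euclidean displacement exceeding $2\rho+1$ on $K_{\eta(k)}$. Applying Lemma \ref{lem:runge} to this generalized null curve yields a genuine null curve $H_k\in\Nsf(M)$; choosing the approximation tight enough transfers all the strict inequalities.

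The verification of the six properties then goes as follows. Property (F.2$_k$) is forced by the construction: the spike moves only in $\sigma_k\in\doble{e_{\eta(k)}}^\bot$, so $\doble{H_k-H_{k-1},e_{\eta(k)}}=0$ up to the approximation error, and since this is a \emph{closed} linear condition on $\c^3$ one can in fact impose it exactly by projecting $H_k-H_{k-1}$ onto $\doble{e_{\eta(k)}}^\bot$ — or simply note the Approximation Lemma can be applied within that hyperplane-valued setting. Property (F.5$_k$) is the statement that the deformation is $(\epsilon_0/\esf\mgot)$-small outside $\Omega_{\eta(k)}$; this is arranged by supporting the spike inside $\Omega_{\eta(k)}$ and taking the Runge approximation close enough there. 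Property (F.1$_k$) is then obtained by combining (F.1$_{k-1}$) with (F.5$_k$): off $\Omega_{\eta(k)}$ the map barely changes, and on $\Omega_{\eta(k)}$ one only asserts smallness away from the $K$-discs, using (E.3) as the base case $k=0$ (that is precisely (F.1$_0$)). Property (F.3$_k$) follows because on $K_{\eta(a)}$ for $a<k$ the inequality $\|H_k-X\|>2\rho+1$ is inherited from $\|H_{k-1}-X\|>2\rho+1$ via the small perturbation (F.5$_k$), while for $a=k$ it is the defining feature of the spike. Property (F.6$_k$) — the $\Ccal^1$-smallness on $M_0$ — holds because $M_0$ is disjoint from every $\Omega_{\eta(k)}$ and (F.6$_{k-1}$) plus the tight $\Ccal^1$-approximation on the admissible set carries over; the base case is (D.3). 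Finally (F.4$_k$) is the delicate length estimate: the transversal arc $J\subset\tilde r_{\eta(a)}$ is disjoint from the region $K_{\eta(k)}$ where the spike acts (by (E.1), the tubes $\tilde r$ avoid the interiors of the $K$-discs), so on $J_1\cup J_2$ the map changes by at most $\epsilon_0/\esf\mgot$ in $\Ccal^1$; hence the length of $(\Acal_{\eta(a)}\cdot H_k|_{J_1})_3$ differs from that of $(\Acal_{\eta(a)}\cdot H_{k-1}|_{J_1})_3$ by a controllably small amount, and iterating, from the corresponding length for $H=H_0$, which satisfies the required lower bound by (E.4).

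The main obstacle I anticipate is \textbf{property (F.4$_k$)}: one must guarantee that the successive perturbations never erode the length bound along the transversal arcs $\tilde r$, even though those arcs run through \emph{all} the annuli $A_i$ and hence past many of the discs $\Omega_{\eta(k)}$ (not just $K_{\eta(k)}$, but the neighbouring tubes $\tilde r_{\eta(k)\pm(0,1)}$ border on $\Omega_{\eta(k)}$). The resolution is bookkeeping with the constant $\epsilon_0/\esf\mgot$: since there are exactly $\esf\mgot$ steps, the total accumulated change along any fixed $\tilde r_{\eta(a)}$ is at most $\epsilon_0$, so choosing $\epsilon_0$ small compared to the slack in (E.4) — itself inherited with room to spare from (D.2) — preserves the strict inequality at every stage. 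A secondary technical point is ensuring that each $H_k$ genuinely lies in $\Nsf(M)$, i.e. that the intermediate generalized null curve is smooth where the spike arc meets the region containing $M_0$ and the tubes; this is handled exactly as in Claim \ref{cl:antena}, by smoothing the zig-zag so it matches $H_{k-1}$ to first order at the endpoints before invoking Lemma \ref{lem:runge}. With $\epsilon_0$ fixed small enough at the outset (depending only on $\rho$, $\epsilon$, $\mu$ and the data $M_0$, the $V_{i,j}$, the $\Acal_{i,j}$), all of (F.1$_k$)--(F.6$_k$) hold simultaneously and the recursion closes.
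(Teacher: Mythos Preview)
Your overall plan---construct $H_k$ from $H_{k-1}$ via a generalized null curve on an admissible set and then apply Lemma~\ref{lem:runge}---matches the paper, but the execution has two genuine gaps.

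\textbf{The admissible set is too small.} You propose $S_k\approx M_0\cup(\bigcup_{i,j}\tilde r_{i,j})\cup\ell_k$. Runge approximation only controls $H_k$ on $S_k$; off $S_k$ the null curve can be arbitrary. With your $S_k$, the whole of $\overline{M-\Omega_{\eta(k)}}$---in particular every $K_{\eta(a)}$ with $a\neq k$ and every strip $\tilde\alpha_{i,j}$---is uncontrolled, so (F.5$_k$) and therefore (F.1$_k$) and the cases $a<k$ of (F.3$_k$) cannot be deduced. Moreover $K_{\eta(k)}$ itself is only touched by the arc $\ell_k$, so after approximation you control the values of $H_k$ only along $\ell_k$, not on the full disc $K_{\eta(k)}$, and the case $a=k$ of (F.3$_k$) also fails. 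The paper takes $S_k=\overline{M-\Omega_{\eta(k)}}\cup\gamma\cup K_{\eta(k)}$, with $K_{\eta(k)}$ included as a \emph{region}: the generalized null curve equals $H_{k-1}$ on $\overline{M-\Omega_{\eta(k)}}$ and equals $H_{k-1}+\text{(large constant vector)}$ on $K_{\eta(k)}$, with the arc $\gamma\subset\tilde\alpha_{\eta(k)}$ interpolating between the two values. This is a translation, not a zig-zag---a back-and-forth as in Claim~\ref{cl:antena} accumulates \emph{length} but produces no net displacement, whereas (F.3$_k$) demands a pointwise displacement exceeding $2\rho+1$.

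\textbf{(F.2$_k$) is an identity, not an approximation, and it drives (F.4$_k$).} Your suggestions---project $H_k-H_{k-1}$ onto $\doble{e_{\eta(k)}}^\bot$, or ``apply the Approximation Lemma in the hyperplane''---do not work as stated: projecting destroys the null condition, and Lemma~\ref{lem:runge} produces $\c^3$-valued null curves. The paper instead conjugates by $\Acal_{\eta(k)}$ (so that the $e_{\eta(k)}$-component becomes the third coordinate), builds the generalized null curve $\hat G$ with $\hat G_3=G_3$ everywhere on $S_k$, and then invokes the \emph{second clause} of Lemma~\ref{lem:runge} to preserve the third coordinate exactly: $Z_3=(\Acal_{\eta(k)}\cdot H_{k-1})_3$ on all of $M$. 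This gives (F.2$_k$) exactly and, crucially, yields $(\Acal_{\eta(k)}\cdot H_k)_3=(\Acal_{\eta(k)}\cdot H_{k-1})_3$ on $\Omega_{\eta(k)}$. That equality is what rescues (F.4$_k$) for the two values of $a$ whose tube $\tilde r_{\eta(a)}$ meets $\Omega_{\eta(k)}$: on $J\cap\Omega_{\eta(k)}$ the map $H_k$ is \emph{not} close to $H_{k-1}$ (this part of the tube lies outside $\overline{M-\Omega_{\eta(k)}}$), but the relevant third-coordinate length is nonetheless unchanged. Your argument that ``$J$ is disjoint from $K_{\eta(k)}$, hence the map changes by at most $\epsilon_0/\esf\mgot$ on $J$'' conflates the support of the generalized spike with the region where Runge approximation is controlled; they are not the same.
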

\begin{proof}
Properties (F.1$_0$), (F.4$_0$) and (F.6$_0$) follow from (E.3), (E.4) and (D.3), whereas (F.2$_0$), (F.3$_0$) and (F.5$_0$) make no sense. We reason by induction. Assume that we have defined $H_0,\ldots,H_{k-1}$ satisfying the desired properties and let us construct $H_k.$

Label $G=\Acal_{\eta(k)}\cdot H_{k-1}\in\Nsf(M)$ and write $G=(G_1,G_2,G_3)^T.$

Let $\gamma$ denote a Jordan arc in $\tilde{\alpha}_{\eta(k)}$ disjoint from $\tilde{r}_{\eta(k)-(0,1)} \cup \tilde{r}_{\eta(k)},$ with initial point $\gamma_0\in\alpha_{\eta(k)}$ and final point $\gamma_1\in\partial K_{\eta(k)}$ and otherwise disjoint from $\partial \tilde{\alpha}_{\eta(k)}.$ Choose $\gamma$ so that $dG_3|_{\gamma}$ never vanishes. Denote $S_k=\overline{(M-\Omega_{\eta(k)})\cup \gamma\cup K_{\eta(k)}},$ and without loss of generality assume that $S_k$ is admissible. 

Let $\gamma(u),$ $u \in [0,1],$ be a smooth parameterization of $\gamma$ with $\gamma(0)=\gamma_0.$  Label  $\tau_j= \gamma([0,1/j])$ and consider the parameterization   $\tau_j(u)=\gamma(u/j),$ $u \in [0,1].$ Write $G_{3,j}(u)=G_3(\tau_j(u)),$  $u \in [0,1],$ and notice that $\frac{d G_{3,j}}{du}(0)=\frac1{j}\frac{d (G_{3}\circ \gamma)}{du}(0)$ for all $j \in \n.$

From the definition of $\Acal_{\eta(k)},$ one has $\Acal_{\eta(k)}(\doble{e_{\eta(k)}}^\bot)= \{(z_1,z_2,0) \in \c^3\,|\, z_1,\,z_2 \in \c\}.$ Set $\zeta^\pm=\lambda(1,\pm\imath,0)\in\c^3,$ where $\lambda\in\r,$ 
\begin{equation}\label{eq:lejos}
\lambda>(2\rho+2+\epsilon_0)\|\Acal_{\eta(k)}\|,
\end{equation}
and notice that $\zeta^\pm=\Acal_{\eta(k)}(\lambda( u_{\eta(k)}\pm v_{\eta(k)})).$
Set 
\[
\zeta_j=\zeta^+- \frac{(d G_{3,j}/du (0))^2}{2\curvo{\, \zeta^+,\zeta^-\,}}\; \zeta^-\in \Acal_{\eta(k)}(\doble{ e_{\eta(k)}}^\bot),\quad j \in \n,
\]
and observe that $\lim_{j \to \infty} \zeta_j =\zeta^+$ and  $\curvo{ \zeta_j,\zeta_j}=-(\frac{d G_{3,j}}{du}(0))^2$ for all $j.$ 
Define $h_j:[0,1] \to \c^3$ as 
\[
h_j(u)=G(\gamma_0)+ \imath \frac{G_{3,j}(u)-G_{3,j}(0)}{\curvo{ \,\zeta_j,\zeta_j\,}^{1/2}} \zeta_j + (0,0,G_{3,j}(u)-G_3(\gamma_0)),
\]
and notice that $\curvo{ h_j'(u),h_j'(u) }=0$ and $\doble{ h_j'(u),h_j'(u) }$ never vanishes on $[0,1],$  $j\in \n.$ Up to choosing a suitable branch of $\curvo{ \zeta_j,\zeta_j}^{1/2},$ the sequence  $\{h_j\}_{j \in \n}$ converges uniformly on $[0,1]$ to $h_\infty:[0,1] \to \c^3,$  $h_\infty(u)=u \zeta^+ + G(\gamma_0).$ From equation \eqref{eq:lejos} one has $\|h_\infty(1)-G(\gamma_0)\|=\|\zeta^+\|>(2\rho+2+\epsilon_0)\|\Acal_{\eta(k)}\|,$ hence  
there exists a large enough $j_0 \in \n$ so that $\|G(\gamma_0)-G(\gamma(1/j_0))\|=\|G(\gamma_0)-G(\tau_{j_0}(1))\|<\|\Acal_{\eta(k)}\|$ and $\|h_{j_0}(1)-G(\gamma_0)\|>(2\rho+2+\epsilon_0)\|\Acal_{\eta(k)}\|.$ In particular,
\begin{equation} \label{eq:lejos1}
\|h_{j_0}(1)-G(\tau_{j_0}(1))\|>(2\rho+1+\epsilon_0)\|\Acal_{\eta(k)}\|.
\end{equation}
 
Set $\hat{h}:\tau_{j_0} \to \c^3,$ $\hat{h}(P)=h_{j_0}(u(P)),$ where $u(P)\in [0,1]$ is the only value for which $\tau_{j_0}(u(P))=P.$   Let $\hat{G}=(\hat{G}_1,\hat{G}_2,\hat{G}_3): S_k\to \c^3$ denote the continuous map given by:   
\begin{equation}\label{eq:Ggorro}
\hat{G}|_{\overline{M-\Omega_{\eta(k)}}}=G|_{\overline{M-\Omega_{\eta(k)}}},\; \hat{G}|_{\tau_{j_0}}=\hat{h},\; \hat{G}|_{(\gamma-\tau_{j_0})\cup K_{\eta(k)}}=G|_{(\gamma-\tau_{j_0})\cup K_{\eta(k)}}-G(\tau_{j_0}(1))+\hat{h}(\tau_{j_0}(1)).
\end{equation}
Notice that $\hat{G}_3=G_3|_{S_k}.$ 
The equation $\curvo{ d\hat{G},d \hat{G}}=0$ formally holds except at the points $\gamma_0$ and $\tau_{j_0}(1)$ where smoothness could fail. 
Then, up to smooth approximation, \eqref{eq:lejos1} and \eqref{eq:Ggorro} give that  $\hat{G}$ is a generalized null curve  satisfying that 
\[
\hat{G}|_{\overline{M-\Omega_{\eta(k)}}}=G|_{\overline{M-\Omega_{\eta(k)}}},\quad \hat{G}_3=G_3|_{S_k},\quad\text{and}\quad \|(\hat{G}-G)(Q)\|>(2\rho+1+\epsilon_0)\|\Acal_{\eta(k)}\|\text{ $\forall Q\in K_{\eta(k)}$.}
\]

Fix $\epsilon_1>0$ which will be specified later. Applying Lemma \ref{lem:runge} to $S_k,$ $M$ and $\hat{G},$ we get a null curve $Z=(Z_1,Z_2,Z_3)^T\in \Nsf(M)$ such that
\begin{itemize}
\item $\|Z- \Acal_{\eta(k)}\cdot H_{k-1}\|_1<\epsilon_1$  on $\overline{M-\Omega_{\eta(k)}},$
\item $Z_3=(\Acal_{\eta(k)}\cdot H_{k-1})_3,$ and
\item $\|(Z-G)(Q)\|>(2\rho+1+\epsilon_0)\cdot\|\Acal_{\eta(k)}\|$ $\forall Q\in K_{\eta(k)}.$
\end{itemize}

Define $H_k:= \Acal_{\eta(k)}^{-1}\cdot Z\in \Nsf(M).$ From the properties of $Z$ above, $H_k$ satisfies that
\begin{enumerate}[{\rm (i)}]
\item $\|H_k- H_{k-1}\|_1< \epsilon_1 \|\Acal_{\eta(k)}^{-1}\|$ on $\overline{M-\Omega_{\eta(k)}},$
\item $\doble{ H_k- H_{k-1},e_{\eta(k)}} = 0$ (see \eqref{eq:orto} and the definition of $A_{\eta(k)}$), and
\item $\|(H_k-H_{k-1})(Q)\|>(2\rho+1+\epsilon_0)$ $\forall Q\in K_{\eta(k)}.$
\end{enumerate}

Let us check that $H_k$ is the null curve we are looking for, provided that $\epsilon_1$ is sufficiently small. Indeed, (ii) directly gives (F.2$_k$), and if $\epsilon_1$ is small enough, then (F.1$_{k-1}$), (F.6$_{k-1}$) and (i) guarantee (F.1$_k$), (F.5$_k$) and (F.6$_k$), and properties (F.1$_{k-1}$), (F.3$_{k-1}$), (i) and (iii) gives (F.3$_k$). Finally, to prove (F.4$_k$) observe that (i) gives that $\|\Acal_{\eta(a)}\cdot H_k- \Acal_{\eta(a)}\cdot H_{k-1}\|<\epsilon_1 \|\Acal_{\eta(k)}^{-1}\|\, \|\Acal_{\eta(a)}\|$ and $\|\Acal_{\eta(a)+(0,1)}\cdot H_k- \Acal_{\eta(a)+(0,1)}\cdot H_{k-1}\|<\epsilon_1 \|\Acal_{\eta(k)}^{-1}\|\, \|\Acal_{\eta(a)+(0,1)}\|$ on $\Omega_{\eta(a)}$ $\forall a\neq k,$ whereas (ii) gives that $(\Acal_{\eta(k)}\cdot H_k)_3= (\Acal_{\eta(k)}\cdot H_{k-1})_3$ on $\Omega_{\eta(k)}.$ Then  (F.4$_{k-1}$) implies (F.4$_k$) provided that $\epsilon_1$ is small enough.
\end{proof}

Consider the null curve $\hat{X}:=H_{\esf\mgot}\in\Nsf(M).$ One has

\begin{claim}\label{cl:dist1}
$\dist_{(M,\hat{X})}(M_0,\partial M)>2\rho.$ Moreover, $\dist_{(M,\hat{X})}(M_0,\cup_{a=1}^{\esf\mgot} K_{\eta(a)})>2\rho.$ 
\end{claim}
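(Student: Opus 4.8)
The plan is to estimate, from below, the length of an arbitrary divergent path in $(M,\hat X)$ joining $M_0$ to $\partial M$ (respectively to $\cup_a K_{\eta(a)}$), using the combinatorial structure of the annuli $A_i=\cup_j\Omega_{i,j}$ together with the length estimates (F.4$_{\esf\mgot}$). First I would fix a path $\gamma$ in $M$ with one endpoint in $M_0$ and the other in $\partial M$ (the case of paths ending in some $K_{\eta(a)}$ is analogous, using that $K_{\eta(a)}\subset\Omega_{\eta(a)}$ meets $\beta_{\eta(a)}$). Since $N<M_0<M$ and $M-M_0^\circ=\cup_i A_i$, the path $\gamma$ must cross at least one annulus $A_i$ from its inner boundary component $\alpha_i$ to its outer one $\beta_i$. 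Inside $A_i$, the disc $\Omega_{i,j}$ is bounded by $\alpha_{i,j}$, the arcs (neighborhoods) $\tilde r_{i,j-1},\tilde r_{i,j}$ and the boundary piece $\beta_{i,j}$; so after restricting to a suitable subarc we may assume $\gamma$ runs inside $\overline{A_i}$ from $\alpha_i$ to $\beta_i$ and hence must cross at least one of the strips $\tilde r_{i,j}$ from the ``$\alpha$-side'' $\tilde\alpha_{i,j}\cup\tilde\alpha_{i,j+1}$ to the ``$\beta$-side'' $\beta_{i,j}\cup\beta_{i,j+1}$. (If it never left the strips the endpoints could not be separated; a clean way to see this is that $\gamma$, viewed rel the two boundary circles of $A_i$, has nonzero intersection number with the transverse arc $\tilde r_{i,j}$ for some $j$.)

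The key step is then a pointwise length comparison. Writing $J$ for such a crossing subarc of $\gamma$ inside $\tilde r_{\eta(a)}$ with $J_1=J\cap\Omega_{\eta(a)}$, $J_2=J\cap\Omega_{\eta(a)+(0,1)}$, Remark \ref{rem:AF} gives for any complex orthogonal $A$ the bound $\hat X^*\esca{\cdot,\cdot}\ge\frac1{\|A\|^2}(A\circ\hat X)^*\esca{\cdot,\cdot}$, and the Euclidean length of a curve in $\c^3$ dominates the Euclidean length of any of its complex coordinate projections. Hence
\[
\mathrm{length}_{\hat X}(J_1)\ \ge\ \frac1{\|\Acal_{\eta(a)}\|}\,\Lcal_\c\big((\Acal_{\eta(a)}\cdot\hat X|_{J_1})_3\big),\qquad
\mathrm{length}_{\hat X}(J_2)\ \ge\ \frac1{\|\Acal_{\eta(a)+(0,1)}\|}\,\Lcal_\c\big((\Acal_{\eta(a)+(0,1)}\cdot\hat X|_{J_2})_3\big).
\]
Adding these and invoking (F.4$_{\esf\mgot}$) (applied to $\hat X=H_{\esf\mgot}$) yields
\[
\mathrm{length}_{\hat X}(J)\ \ge\ \mathrm{length}_{\hat X}(J_1)+\mathrm{length}_{\hat X}(J_2)\ >\ \frac{2\rho\cdot\max\{\|\Acal_{\eta(a)}\|,\|\Acal_{\eta(a)+(0,1)}\|\}}{\max\{\|\Acal_{\eta(a)}\|,\|\Acal_{\eta(a)+(0,1)}\|\}}\ =\ 2\rho,
\]
using only that each of the two terms on the left is at least $1/\max\{\|\Acal_{\eta(a)}\|,\|\Acal_{\eta(a)+(0,1)}\|\}$ times the corresponding $\Lcal_\c$-term. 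Since $J\subset\gamma$, this gives $\mathrm{length}_{\hat X}(\gamma)>2\rho$, and taking the infimum over all such $\gamma$ proves $\dist_{(M,\hat X)}(M_0,\partial M)>2\rho$. For the second assertion one repeats the argument noting that a path from $M_0$ to $\cup_a K_{\eta(a)}$ either already crosses a full strip $\tilde r_{\eta(a)}$ (done as above) or ends on $\partial K_{\eta(a)}\subset\Omega_{\eta(a)}$ after entering $A_i$ from $\alpha_i$, in which case it still crosses some strip $\tilde r_{\eta(b)}$ separating $\alpha_i$ from $K_{\eta(a)}$ inside $\overline{A_i}$.

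The main obstacle I anticipate is purely topological bookkeeping rather than analysis: making precise that any path from the inner to the outer boundary of the annulus $A_i$ (or from $\alpha_i$ into one of the $K$'s) must contain a subarc that genuinely crosses one of the transversal strips $\tilde r_{\eta(a)}$ from its $\alpha$-side to its $\beta$-side, and that this subarc is split by $\Omega_{\eta(a)}$ and $\Omega_{\eta(a)+(0,1)}$ exactly as the hypothesis of (F.4$_{\esf\mgot}$) requires. This needs a careful description of the cell decomposition of $A_i$ into the discs $\Omega_{i,j}$ with the strips $\tilde\alpha_{i,j}$, $\tilde r_{i,j}$ as walls, and an intersection-number argument with the arcs $\tilde r_{i,j}$ relative to $\partial A_i$; once that is set up, the metric estimate is the short computation displayed above. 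A minor additional point is handling the case where the crossing arc $J$ meets several $\Omega$'s or wanders in and out of a strip — one reduces to a ``clean'' crossing by passing to a subarc, at the cost of only decreasing the length, which is harmless for a lower bound.
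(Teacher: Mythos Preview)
Your topological claim that every path from $\alpha_i$ to $\beta_i$ (or into some $K_{\eta(a)}$) must contain a subarc crossing a strip $\tilde r_{i,j}$ from its $\alpha$-side to its $\beta$-side is false, and this is a genuine gap. Consider a path that stays entirely inside a single sector $\Omega_{i,j}$: it can start on $\alpha_{i,j}$, pass through $\tilde\alpha_{i,j}$, enter $K_{i,j}$ directly (recall $K_{i,j}=\overline{\Omega_{i,j}-(\tilde r_{i,j-1}\cup\tilde\alpha_{i,j}\cup\tilde r_{i,j})}$ is adjacent to $\tilde\alpha_{i,j}$), and then exit onto $\beta_{i,j}$ (by (E.2), $K_{i,j}\cap\beta_{i,j}$ is a Jordan arc). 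Such a path never enters any $\tilde r_{i,j}$ at all, so (F.4$_{\esf\mgot}$) gives you nothing. The radial strips $\tilde r_{i,j}$ are pairwise disjoint (see (E.1)) and do not by themselves separate $\alpha_i$ from $\beta_i$; what separates them is the union of the $\tilde r_{i,j}$'s \emph{and} the $K_{i,j}$'s.

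The paper's proof handles exactly this by a case distinction you are missing. If the path $\gamma$ meets some $K_{\eta(k)}$, one does not use (F.4) at all: instead one picks $P_0\in\gamma\cap K_{\eta(k)}$ and $Q_0\in\gamma\cap(r_{\eta(k)-(0,1)}\cup\alpha_{\eta(k)}\cup r_{\eta(k)})$ and estimates the straight-line distance
\[
\|\hat X(P_0)-\hat X(Q_0)\|\ \ge\ \|(\hat X-X)(P_0)\|-\|X(P_0)-X(Q_0)\|-\|(\hat X-X)(Q_0)\|\ >\ (2\rho+1)-\epsilon_0-\epsilon_0\ >\ 2\rho,
\]
using (F.3$_{\esf\mgot}$), (A.2) with \eqref{eq:Oin}, and (F.1$_{\esf\mgot}$). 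Only when $\gamma$ avoids every $K_{\eta(k)}$ is it forced to cross some $\tilde r_{i,j}$ fully, and then your argument via (F.4$_{\esf\mgot}$) and Remark~\ref{rem:AF} works. The second assertion of the claim (distance to $\cup_a K_{\eta(a)}$) is in fact a byproduct of the first case, not of a strip-crossing argument.
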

\begin{proof}
Indeed, consider a connected curve $\gamma$ in $M$ with initial point $Q\in M_0$ and final point $P\in\partial M.$ Let us distinguish cases. Assume there exists $k\in\{1,\ldots,\esf\mgot\}$ and a point $P_0\in \gamma\cap K_{\eta(k)}\neq \emptyset.$ Then there exists a point $Q_0\in\gamma\cap (r_{\eta(k)-(0,1)}\cup\alpha_{\eta(k)}\cup r_{\eta(k)})$ and one has 
\begin{eqnarray*}\label{eq:enK}
\Lcal_{(M,\hat{X})}(\gamma) =  \Lcal_{\c^3}(\hat{X}(\gamma))
& \geq & \|\hat{X}(P_0)-\hat{X}(Q_0)\|\\
 & \geq & \|(\hat{X}-X)(P_0)\|-\|X(P_0)-X(Q_0)\|-\|(\hat{X}-X)(Q_0)\|
\\
 & \geq & 2\rho+1 - \epsilon_0 -\epsilon_0 > 2\rho,
\end{eqnarray*}
where $\Lcal_{(M,\hat{X})}(\cdot)$ means length in $M$ with respect to the metric $\hat{X}^*\doble{\cdot,\cdot},$ $\Lcal_{\c^3}(\cdot)$ means Euclidean length in $\c^3.$ To achieve these bounds we have used (F.3$_{\esf\mgot}$); (A.2) and \eqref{eq:Oin}; and (F.1$_{\esf\mgot}$); respectively (assume from the beginning $\epsilon_0<1/2$). This also proves the second part of the claim.

Assume now that $\gamma\cap (\cup_{k=1}^{\esf\mgot} K_{\eta(k)})=\emptyset.$ Then there exist $k\in\{1,\ldots,\esf\mgot\}$ and a connected sub-arc $\hat{\gamma}\subset\gamma$ such that $P\in\beta_{\eta(k)}\cup\beta_{\eta(k)+(0,1)},$ $\hat{\gamma}\subset \tilde{r}_{\eta(k)}$ and $\hat{\gamma}\cap(\tilde{\alpha}_{\eta(k)}\cup\tilde{\alpha}_{\eta(k)+(0,1)})\neq \emptyset.$ Remark \ref{rem:AF} and (F.4$_{\esf\mgot}$) give $\Lcal_{(M,\hat{X})}(\gamma)\geq \Lcal_{\c^3}(\hat{X}(\gamma))\geq  \Lcal_{\c^3}(\hat{X}(\hat{\gamma}))>2\rho$ and the proof is done.
\end{proof}

\begin{claim}\label{cl:dist2}
$\|\hat{X}(Q)-\Fgot(\Pgot(Q))\|<\sqrt{4\rho^2+\mu^2}+\epsilon$ for any $Q\in M-M_0$ with $\dist_{(M,\hat{X})}(M_0,Q)<2\rho.$
\end{claim}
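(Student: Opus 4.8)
The plan is to estimate $\|\hat X(Q)-\Fgot(\Pgot(Q))\|$ by inserting intermediate points and using the Pythagorean-type bound $\sqrt{4\rho^2+\mu^2}$ already flagged in the statement. Fix $Q\in M-M_0$ with $\dist_{(M,\hat X)}(M_0,Q)<2\rho$. By Claim \ref{cl:dist1} (second part) such a $Q$ cannot lie in any $K_{\eta(a)}$, so $Q$ belongs to some $\tilde r_{\eta(k)}\cup\tilde\alpha_{\eta(k)}$ inside an annulus $A_i$; in particular $Q\in V_{i,j-1}\cap V_{i,j}$ for the appropriate index $j$, and the projected point $\Pgot(Q)$ lies in the same $V$'s by (A.1). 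Write $e=e_{\eta(k)}$, $\Pi=\Pi_{i,j}$ for the orthogonal projection onto $\doble{e}^\bot$, and decompose $\c^3=\doble{e}^\bot\oplus\esca{e}_{\c}$ orthogonally. The key structural fact is property (F.2$_k$): each deformation step changes $H_k$ only in the $\doble{e_{\eta(k)}}$-direction relative to $H_{k-1}$, so on the part of $\Omega_{\eta(k)}$ where $Q$ sits one has $\Pi(\hat X)=\Pi(H_k)=\cdots=\Pi(X)$ — the $\doble{e}^\bot$-component of $\hat X$ at $Q$ agrees (up to the accumulated $\epsilon_0$-errors from the steps $a\neq k$ via (F.5)) with that of $X$.

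Then I would split the vector $\hat X(Q)-\Fgot(\Pgot(Q))$ into its $\Pi$-component and its $(\mathrm{Id}-\Pi)$-component. For the $\Pi$-component: $\|\Pi(\hat X(Q)-\Fgot(\Pgot(Q)))\|\le\|\Pi(\hat X(Q)-X(Q))\|+\|\Pi(X(Q)-X(P_{i,j}))\|+\|\Pi(X(P_{i,j})-\Fgot(\Pgot(P_{i,j})))\|+\|\Pi(\Fgot(\Pgot(P_{i,j}))-\Fgot(\Pgot(Q)))\|$. The first term is $O(\epsilon_0)$ by (F.2) together with (F.5) (only the $j$-th step moves things outside $\doble{e}^\bot$, and that step is killed by $\Pi$), the second and fourth are $<\epsilon_0$ by (A.2), and the third is $<\epsilon_0$ by \eqref{eq:eij}. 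So the $\Pi$-component is bounded by a fixed multiple of $\epsilon_0$. For the $(\mathrm{Id}-\Pi)$-component, which is one-complex-dimensional hence two-real-dimensional: here I bound $\|\hat X(Q)-X(Q)\|$-type quantities in the $e$-direction by $2\rho$ using the distance hypothesis $\dist_{(M,\hat X)}(M_0,Q)<2\rho$ — a path from $M_0$ to $Q$ of $\hat X$-length $<2\rho$ controls $\|\hat X(Q)-\hat X(Q')\|$ for $Q'$ on $\partial M_0$ — combined with (F.1) ($\|H_{k-1}-X\|<\epsilon_0$ off the $K$'s and $\Omega$'s already processed) and (A.2)/\eqref{eq:eij} to replace the relevant base values, and then use \eqref{eq:CL} / property (A.3), namely $\|X-\Fgot\circ\Pgot\|<\mu$, to bring in $\Fgot(\Pgot(Q))$. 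This yields that the $(\mathrm{Id}-\Pi)$-component is $<\sqrt{(2\rho)^2+\mu^2}$ up to $O(\epsilon_0)$: the $2\rho$ and the $\mu$ enter through two orthogonal real directions inside the $e$-line's $\r^2$ — the "radial" growth of size $\le2\rho$ and the original defect $\mu$ — so they add in quadrature, giving $\sqrt{4\rho^2+\mu^2}$.

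Combining, $\|\hat X(Q)-\Fgot(\Pgot(Q))\|^2\le(\text{$\Pi$-part})^2+(\text{$(\mathrm{Id}-\Pi)$-part})^2<C\epsilon_0^2+(4\rho^2+\mu^2+C'\epsilon_0)$, and choosing $\epsilon_0$ small enough at the start (recall $\epsilon_0$ was left to be specified) makes the right side $<(\sqrt{4\rho^2+\mu^2}+\epsilon)^2$. This proves the claim.

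The main obstacle I anticipate is bookkeeping the accumulated errors correctly: because $Q$ may lie in $\Omega_{\eta(k)}$ for one specific $k$ and the estimate (F.2$_k$) only annihilates the $k$-th increment under $\Pi$, I must argue carefully that all the \emph{other} increments $H_a-H_{a-1}$, $a\neq k$, are uniformly small at $Q$ — this is exactly what (F.5$_a$) gives on $\overline{M-\Omega_{\eta(a)}}$, but one has to check $Q\in\overline{M-\Omega_{\eta(a)}}$ for $a\neq k$, i.e. that the $\Omega$'s a point of $\tilde r_{\eta(k)}$ can belong to are limited, which follows from (E.1) and the construction of the $\tilde r$'s and $K$'s. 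The second delicate point is making the orthogonal decomposition into the $2\rho$-direction and the $\mu$-direction genuinely orthogonal so that the Pythagorean bound $\sqrt{4\rho^2+\mu^2}$ (rather than the cruder $2\rho+\mu$) comes out; this is precisely where the choice of $e_{i,j}\notin\Theta$ with \eqref{eq:eij} and the identity \eqref{eq:orto} are used, together with the fact that $\hat X-X$ lives, modulo $\epsilon_0$, in the complex line $\doble{e}$ while $X-\Fgot\circ\Pgot$ contributes its $\mu$-bounded piece there as well, and the geometry of that $\r^2$ lets the "length added along $r$" (controlled by $2\rho$) sit perpendicular to the residual $\Fgot$-defect.
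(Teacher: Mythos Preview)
You have the orthogonal decomposition backwards, and this is a genuine gap. Property (F.2$_k$) reads $\doble{H_k-H_{k-1},e_{\eta(k)}}=0$, which says that the big increment $H_k-H_{k-1}$ lies \emph{in} $\doble{e_{\eta(k)}}^\bot=\operatorname{Im}\Pi_{\eta(k)}$, not in the complex line $\c e_{\eta(k)}$. Hence $\Pi_{\eta(k)}$ does \emph{not} kill the $k$-th step; what it kills is the component along $e_{\eta(k)}$, i.e.\ $(\mathrm{Id}-\Pi_{\eta(k)})(H_k-H_{k-1})=0$. Your claim that ``the $\doble{e}^\bot$-component of $\hat X$ at $Q$ agrees (up to $O(\epsilon_0)$) with that of $X$'' is therefore false: it is the $e$-component that is preserved up to $O(\epsilon_0)$, while the $\doble{e}^\bot$-component absorbs the large displacement of size $\sim 2\rho$.

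The correct split, as in the paper, is the reverse of yours. The $e$-component of $\hat X(Q)-\Fgot(\Pgot(Q))$ is bounded by $\mu+O(\epsilon_0)$: use (F.5$_a$) for $a>k$, then (F.2$_k$) to annihilate the $k$-th increment, then (F.1$_{k-1}$), then (A.3). The $\Pi$-component is bounded by $2\rho+O(\epsilon_0)$: pick a point $P$ on $r_{\eta(k)-(0,1)}\cup\alpha_{\eta(k)}\cup r_{\eta(k)}$ with $\dist_{(M,\hat X)}(P,Q)<2\rho$ (such $P$ exists since any short path from $M_0$ to $Q\in\Omega_{\eta(k)}$ must cross that piece of $\partial\Omega_{\eta(k)}$), then chain $\|\hat X(Q)-\hat X(P)\|<2\rho$, (F.1$_{\esf\mgot}$) at $P$, (A.2), \eqref{eq:eij}, and (A.2) again. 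Pythagoras between these two components gives $\sqrt{4\rho^2+\mu^2}+O(\epsilon_0)$. Your attempt to place both the $2\rho$ and the $\mu$ ``orthogonally inside the $e$-line's $\r^2$'' cannot work: there is no mechanism making those two contributions orthogonal within a single complex line, and indeed the $\mu$-bound (A.3) and the $2\rho$-bound are full $\c^3$-norm bounds, not bounds on real components of a fixed $\r^2$.
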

\begin{proof}
Obviously there exist $k\in\{1,\ldots,\esf\mgot\}$ and $P\in r_{\eta(k)-(0,1)}\cup\alpha_{\eta(k)}\cup r_{\eta(k)}$ such that
\begin{equation}\label{eq:2rho}
Q\in\Omega_{\eta(k)}-K_{\eta(k)} \quad\text{and}\quad \dist_{(M,\hat{X})}(P,Q)<2\rho,
\end{equation}
see Claim \ref{cl:dist1}. By Pitagoras Theorem,
\begin{equation}\label{eq:pita1}
\|\hat{X}(Q)-\Fgot(\Pgot(Q))\|= \sqrt{|\doble{\hat{X}(Q)-\Fgot(\Pgot(Q)),e_{\eta(k)}}|^2+\|\Pi_{\eta(k)}(\hat{X}(Q)-\Fgot(\Pgot(Q)))\|^2}.
\end{equation}
Let us bound the two addens in the right part of this inequality. On the one hand,
\begin{equation}\label{eq:pita2}
\begin{array}{rcl}
|\doble{\hat{X}(Q)-\Fgot(\Pgot(Q)),e_{\eta(k)}}| & \leq & \|\hat{X}(Q)-H_k(Q)\|+|\doble{(H_k-H_{k-1})(Q),e_{\eta(k)}}| 
\\
& + & \|H_{k-1}(Q)-X(Q)\| + \|X(Q)-\Fgot(\Pgot(Q))\|
\\
& < & \epsilon_0 + 0 + \epsilon_0 + \mu,
\end{array}
\end{equation}
where we have used (F.5$_a$), $a=k+1,\ldots,\esf\mgot;$ (F.2$_k$); (F.1$_{k-1}$); and (A.3). On the other hand,
\begin{equation}\label{eq:pita3}
\begin{array}{rcl}
\|\Pi_{\eta(k)}(\hat{X}(Q)-\Fgot(\Pgot(Q)))\| & \leq & \|\hat{X}(Q)-\hat{X}(P)\|+\|\hat{X}(P)-X(P)\|+\|X(P)-X(P_{\eta(k)})\|
\\
& + & \|\Pi_{\eta(k)}(X(P_{\eta(k)})-\Fgot(\Pgot(P_{\eta(k)})))\|+\|\Fgot(\Pgot(P_{\eta(k)}))-\Fgot(\Pgot(Q))\|
\\
& < & 2\rho+\epsilon_0+\epsilon_0+ \epsilon_0 + \epsilon_0,
\end{array}
\end{equation}
where we have used \eqref{eq:2rho}; (F.1$_{\esf\mgot}$); (A.2); \eqref{eq:eij}; and (A.2); to get the corresponding bounds. Combining \eqref{eq:pita1}, \eqref{eq:pita2} and \eqref{eq:pita3} one has
\[
\|\hat{X}(Q)-\Fgot(\Pgot(Q))\|<\sqrt{4\rho^2+\mu^2+(4\mu + 16\rho)\epsilon_0+20 \epsilon_0}<\sqrt{4\rho^2+\mu^2}+\epsilon,
\]
provided that $\epsilon_0$ is chosen from the beginning to satisfy the second inequality, and we are done. 
\end{proof}

By Claim \ref{cl:dist1} there exists $\Mcal\in \Bsf(\Rcal)$ such that
\begin{enumerate}
\item[{\rm (G.1)}] $M_0<\Mcal<M,$ and
\item[{\rm (G.2)}] $\rho<\dist_{(\Mcal,\hat{X})}(M_0,Q)<2\rho$ $\forall Q\in \partial\Mcal.$
\end{enumerate}
Furthermore, up to infinitesimal variations of the boundary curves of $\Mcal,$ we can guarantee that 
\begin{enumerate}
\item[{\rm (G.3)}] $\hat{X}|_{\partial\Mcal}:\partial\Mcal\to \c^3$ is an embedding.
\end{enumerate}

Set $\Xcal:=\hat{X}|_\Mcal\in\Nsf(\Mcal).$ Item (\ref{lem:CL}.a) follows from \eqref{eq:M_0} and (G.1). (G.3) directly gives(\ref{lem:CL}.b). (\ref{lem:CL}.c) is implied by (G.2). (\ref{lem:CL}.d) follows from (F.6$_{\esf\mgot}$) and \eqref{eq:M_0} provided that $\epsilon_0$ is chosen less than $\epsilon.$ Finally, Claim \ref{cl:dist2} and (G.2) give (\ref{lem:CL}.e).
This proves Lemma \ref{lem:CL}. 


\section{Main Lemma}\label{sec:ML}

In this section we show the following density type result for null curves:

\begin{lemma}\label{lem:ML}
Let $V,W\in\Bsf(\Rcal)$ with $V<W.$ Let $Y\in\Nsf(W)$ and let $\lambda>0.$

Then there exist $\Wcal\in\Bsf(\Rcal)$ and $\Ycal\in\Nsf(\Wcal)$ satisfying that
\begin{enumerate}[{\rm (\ref{lem:ML}.a)}]
\item $\Ycal|_{\partial \Wcal}:\partial\Wcal\to\c^3$ is an embedding,
\item $V<\Wcal<W,$
\item $\lambda<\dist_{(\Wcal,\Ycal)}(V,\partial \Wcal),$
\item $\|\Ycal-Y\|<1/\lambda$ on $\Wcal,$
\item $\delta^H(\Ycal(\partial\Wcal),Y(\partial W))<1/\lambda,$ and
\item $\|\Ycal-Y\|_1<1/\lambda$ on $V.$
\end{enumerate}
\end{lemma}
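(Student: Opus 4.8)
The plan is to deduce Lemma \ref{lem:ML} from the Completeness Lemma (Lemma \ref{lem:CL}) by an iteration. Roughly, a single application of Lemma \ref{lem:CL} increases the intrinsic distance to the boundary by a fixed amount $\rho$ while keeping the boundary within Hausdorff distance $\sqrt{4\rho^2+\mu^2}+\epsilon$ of a prescribed analytic model map; the price is that the boundary moves outward by $\sqrt{4\rho^2+\mu^2}$ rather than staying put. To get an intrinsically complete-looking curve while controlling the boundary we must make each successive step add geometrically less length and move the boundary geometrically less, so that a convergent telescoping series controls everything. So I would fix a summable sequence: pick $\rho_n>0$ with $\sum_n \rho_n$ finite but $\sum_{n\le N}\rho_n\to$ something $>\lambda$ (this is impossible with a convergent series, so instead the standard trick is used: only finitely many steps are needed to overshoot $\lambda$, and the geometric control is on the \emph{errors}, not on the $\rho_n$ themselves). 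Concretely, I would apply Lemma \ref{lem:CL} finitely many times $k=1,\ldots,\ell$, with $\rho$ at step $k$ chosen so that the accumulated intrinsic radius exceeds $\lambda$, and with the perturbation parameter $\epsilon_k$ at step $k$ chosen so small that $\sum_k \epsilon_k < 1/(2\lambda)$ and so that the $\Ccal^1$-closeness on $V$ is preserved through all later steps.

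Here is the iteration in order. Set $M_0=W$, $X_0=Y$, and take the analytic model $\Fgot_0:\partial W\to\c^3$ to be (an analytic approximation of) $Y|_{\partial W}$, with $\mu_0$ small. Also fix a metric tubular neighborhood $\Tsf_0$ of $\partial W$ and the projection $\Pgot_0$. Apply Lemma \ref{lem:CL} with $N=V$, $M=M_0$, $\Fgot=\Fgot_0$, $\mu=\mu_0$, and suitable $\rho_1,\epsilon_1$ to get $M_1\in\Bsf(\Rcal)$ and $X_1\in\Nsf(M_1)$ with $V<M_1<M_0$, $\partial M_1\subset\Tsf_0$, intrinsic distance from $V$ to $\partial M_1$ larger than $\rho_1$, $\|X_1-X_0\|_1<\epsilon_1$ on $V$, and $\|X_1-\Fgot_0\circ\Pgot_0\|<\sqrt{4\rho_1^2+\mu_0^2}+\epsilon_1$ on $\partial M_1$. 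Now iterate: at step $k+1$, choose an analytic $\Fgot_k:\partial M_k\to\c^3$ approximating $X_k|_{\partial M_k}$ within $\epsilon_k'$, set $\mu_k=\sqrt{4\rho_k^2+\mu_{k-1}^2}+\epsilon_k+\epsilon_k'$, choose a metric tubular neighborhood $\Tsf_k$ of $\partial M_k$ thin enough that $\Tsf_k\subset\{p: \dist_{(M_k,X_k)}(V,p)<\rho_1+\cdots+\rho_k + 1\}$ and thin in the ambient $\Rcal_0$ metric too, and apply Lemma \ref{lem:CL} again with $N=V$, $M=M_k$, producing $M_{k+1}<M_k$ with $\partial M_{k+1}\subset\Tsf_k$ and $X_{k+1}\in\Nsf(M_{k+1})$. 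After $\ell$ steps, where $\ell$ is chosen so that $\rho_1+\cdots+\rho_\ell>\lambda$ (just take all $\rho_k$ equal to, say, $\lambda$ — only $\ell=2$ steps are conceptually needed, but more steps with smaller $\rho_k$ give better Hausdorff control), set $\Wcal=M_\ell$ and $\Ycal=X_\ell$.

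The properties are then read off as follows. (\ref{lem:ML}.a) is immediate from (\ref{lem:CL}.b) at the last step. (\ref{lem:ML}.b), $V<\Wcal<W$, follows since each $M_{k+1}<M_k$ and $M_1<W$, and $V<M_k$ at every step by (\ref{lem:CL}.a). (\ref{lem:ML}.c): by (\ref{lem:CL}.c) at the last step, $\dist_{(\Wcal,\Ycal)}(V,\partial\Wcal)>\rho_\ell$; but to get it $>\lambda$ I instead note that $\dist_{(M_\ell,X_\ell)}(V,\partial M_\ell)$ dominates the distance computed using the nested regions, which by the tubular-neighborhood placement and (\ref{lem:CL}.c) at each step accumulates — here is where one must be careful: the cleanest route is to choose $\Tsf_k$ so that $\partial M_{k+1}$ lies at intrinsic distance (in the \emph{final} metric $X_\ell^*\esca{\cdot,\cdot}$) at least $\rho_1+\cdots+\rho_{k+1}$ from $V$, using that $X_{k+1},\ldots,X_\ell$ are $\Ccal^1$-close to $X_k$ on the relevant nested region (via (\ref{lem:CL}.d) applied with $N$ a slightly larger region, or by an a priori metric comparison). (\ref{lem:ML}.d) and (\ref{lem:ML}.f): on $V$, $\|X_\ell-Y\|_1\le\sum_{k=1}^\ell\|X_k-X_{k-1}\|_1<\sum\epsilon_k<1/\lambda$ by (\ref{lem:CL}.d); on all of $\Wcal=M_\ell\subset W$, one uses (\ref{lem:CL}.d) on the fixed region plus (\ref{lem:CL}.e)-type control on the annular part, summing the $\epsilon_k$ and the $\mu_k$-increments — this requires all $\rho_k$ and $\epsilon_k$ chosen small enough that the total is $<1/\lambda$, which is possible since $\mu_\ell\le\sqrt{(\rho_1+\cdots)^2+\cdots}$ can be kept below $1/(2\lambda)$ by shrinking. (\ref{lem:ML}.e): $\delta^H(\Ycal(\partial\Wcal),Y(\partial W))\le \delta^H(X_\ell(\partial M_\ell),\Fgot_{\ell-1}(\partial M_{\ell-1}))+\cdots$, a telescoping estimate using (\ref{lem:CL}.e) at each step together with the fact that $\Fgot_k$ approximates $X_k|_{\partial M_k}$ and $\partial M_{k+1}$ projects onto $\partial M_k$; again keep the sum $<1/\lambda$ by the choice of parameters.

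The main obstacle, and the part requiring the most care, is (\ref{lem:ML}.c): reconciling "large intrinsic distance to the boundary" with "boundary close to the original boundary and curve $\Ccal^1$-close to $Y$ on $V$." A single step of Lemma \ref{lem:CL} only buys $\rho$; to exceed an arbitrary $\lambda$ one needs several steps, but then one must ensure that later perturbations, which act near the (new, smaller) boundary, do not destroy the intrinsic distance gained in earlier steps. The mechanism is exactly (\ref{lem:CL}.d): at step $k+1$ one should apply Lemma \ref{lem:CL} with the prescribed compact set $N$ being not just $V$ but a region $M_k'$ with $V<M_k'<M_k$ containing the portion of $M_k$ within intrinsic distance $\rho_1+\cdots+\rho_k$ of $V$, so that $X_{k+1}$ is $\Ccal^1$-close to $X_k$ there and hence the metric is nearly unchanged on that portion, by Remark \ref{rem:AF} or directly. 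Propagating this through all $\ell$ steps (with $\ell$ fixed in advance once $\lambda$ is fixed) and bookkeeping the geometric sums of $\epsilon_k$ is the technical heart of the argument.
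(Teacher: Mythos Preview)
Your overall strategy---iterate Lemma \ref{lem:CL} finitely many times until the accumulated intrinsic radius exceeds $\lambda$---is exactly the paper's. You also correctly identify and resolve the issue behind (\ref{lem:ML}.c): at step $k+1$ one must apply Lemma \ref{lem:CL} with $N$ enlarged to a region $M_k'$ nearly filling $M_k$, so that $\Ccal^1$-closeness on $M_k'$ preserves the distance already gained. The paper does the same.

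There is, however, a real gap in how you handle (\ref{lem:ML}.d) and (\ref{lem:ML}.e). You take $\Fgot_k$ to approximate the \emph{current} boundary map $X_k|_{\partial M_k}$, which resets the gap $\|X_k-\Fgot_k\|$ to something tiny. Then (\ref{lem:CL}.e) at step $k+1$ gives $\|X_{k+1}-X_k\circ\Pgot_k\|\lesssim 2\rho_{k+1}$ on $\partial M_{k+1}$, and your telescoping estimate for the Hausdorff distance yields $\delta^H(X_\ell(\partial M_\ell),Y(\partial W))\lesssim 2\sum_k\rho_k$. Since you also need $\sum_k\rho_k>\lambda$, this forces the Hausdorff distance to be $\gtrsim 2\lambda$, not $<1/\lambda$. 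Your claim that ``$\mu_\ell$ can be kept below $1/(2\lambda)$ by shrinking'' cannot be realized with this choice of $\Fgot_k$: the errors accumulate \emph{linearly} in the $\rho_k$, and the tension you yourself flag at the start is not actually resolved. (Using your looser $\mu_k=\sqrt{4\rho_k^2+\mu_{k-1}^2}+\cdots$ in the hypothesis only worsens the output bound; it does not change the underlying geometry.)

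The paper's resolution is to keep $\Fgot$ anchored to the \emph{original} $Y$ throughout: at step $n$ one takes $\Fgot=(Y\circ\Pgot_0\circ\cdots\circ\Pgot_{n-2})|_{\partial W_{n-1}}$, never resetting. Then (\ref{lem:CL}.e) feeds directly into the recursion $\mu_n=\sqrt{\mu_{n-1}^2+4\rho_n^2}+\epsilon_n$, so that $\mu_\ell^2\approx 4\sum_k\rho_k^2$---a sum of \emph{squares}. Taking $\rho_k=\csf/k$ makes $\sum_k\rho_k$ diverge (so some finite $\ell$ works) while $\sum_k\rho_k^2<\csf^2\pi^2/6$ stays small; hence $\mu_\ell<1/(2\lambda)$ for $\csf$ small. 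Because each $\partial W_k$ projects (through the nested tubular neighborhoods) onto $\partial W$, one also has $\|Y-Y\circ\Pgot_0\circ\cdots\circ\Pgot_{k-1}\|<\mu_k$ on $\partial W_k$, and the triangle inequality gives $\|\Ycal-Y\|<2\mu_\ell<1/\lambda$ on $\partial\Wcal$ with \emph{no} telescoping. This Pythagorean (rather than additive) accumulation is precisely what the form $\sqrt{4\rho^2+\mu^2}$ in (\ref{lem:CL}.e) is designed for, and is lost the moment you reset $\Fgot$.

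One further simplification you miss: once $\|\Ycal-Y\|<1/\lambda$ holds on $\partial\Wcal$, item (\ref{lem:ML}.d) on all of $\Wcal$ follows immediately from the Maximum Principle applied to the harmonic map $\Ycal-Y$; no patching of annular estimates is needed.
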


The most important points in this lemma are Items (\ref{lem:ML}.c) and (\ref{lem:ML}.d). They assert that the immersion $\Ycal$ is very close to $Y$ in whole its domain of definition and, however, its intrinsic radious is much larger. Therefore, as we will see in Theorem \ref{th:fun} below, Lemma \ref{lem:ML} is a powerful tool for constructing complete bounded null curves in $\c^3.$ 

\subsection{Proof of Lemma \ref{lem:ML}}

Fix a positive constant $\rho_1$ which will be specified later. Consider sequences $\{\rho_n\}_{n\in\n}$ and $\{\mu_n\}_{n\in\n}$ given by
\begin{equation}\label{eq:pitag}
\rho_n=\rho_1+\sum_{j=2}^n \frac{\csf}{j}\quad\text{and}\quad \mu_n=\sqrt{\mu_{n-1}^2+4\big(\frac{\csf}{n}\big)^2}+\frac{\csf}{n^2},\quad \forall n\geq 2,
\end{equation}
where $\csf$ and $\mu_1$ are small enough positive constants so that
\begin{equation}\label{eq:csf}
\mu_n <\frac1{2\lambda}\quad\forall n\in\n.
\end{equation}

For convenience write $W_0$ for $W.$ Let $\Tsf_0$ be a metric tubular neighborhood of $\partial W_0$ in $\Rcal$ disjoint from $\overline{V}$  
and denote by $\Pgot_0:\Tsf_0\to\partial W_0$ the natural projection.

\begin{claim}\label{cl:ML}
There exists a sequence $\{\Xi_n=(W_n,\Tsf_n,Y_n)\}_{n\in\n},$ where $W_n\in\Bsf(\Rcal),$ $\Tsf_n$ is a metric tubular neighborhood of $\partial W_n$ in $\Rcal$ and $Y_n\in\Nsf(W_n),$  satisfying the following properties:
\begin{enumerate}[{\rm (1$_n$)}]
\item $Y_n|_{\partial W_n}:\partial W_n\to \c^3$ is an embedding,
\item  $V<W_n<W_{n-1}<W_0$ and $\Tsf_n \subset \Tsf_{n-1}\subset \Tsf_0,$   $n\geq 1,$  
\item $\rho_n<\dist_{(W_n,Y_n)}(V,\partial W_n),$
\item $\max\{\|Y-Y\circ\Pgot_0\circ \ldots \circ \Pgot_{n-1} \|\,,\,\|Y_n-Y\circ\Pgot_0\circ \ldots \circ \Pgot_{n-1} \|\}<\mu_n$ on $\partial W_n,$ where $\Pgot_j:\Tsf_j\to\partial W_j$ is the corresponding orthogonal projection for all $j,$   and
\item $\|Y_n-Y\|_1<1/\lambda$ on $V.$
\end{enumerate}
\end{claim}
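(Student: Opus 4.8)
The plan is to construct the sequence $\{\Xi_n\}_{n\in\n}$ by induction on $n$, using the Completeness Lemma (Lemma \ref{lem:CL}) at each step. First I set $\Xi_0=(W_0,\Tsf_0,Y)$; then (1$_0$)--(5$_0$) hold trivially or need only an infinitesimal perturbation of $\partial W_0$ to make $Y|_{\partial W_0}$ an embedding (property (1$_0$)), noting that $\mu_1$ and $\rho_1$ are still free parameters to be fixed at the end, and that the data $Y\circ\Pgot_0\circ\cdots\circ\Pgot_{n-1}$ for $n=0$ is just $Y$ restricted to $\partial W_0$, so (4$_0$) is automatic once $\mu_1>0$ is chosen. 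Actually, since the relevant statement is $\rho_1<\dist_{(W_0,Y)}(V,\partial W_0)$, I would simply \emph{choose} $\rho_1$ small enough (after all the other constants are in place) so that this holds; this is where the freedom in $\rho_1$ is consumed.

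For the inductive step, suppose $\Xi_{n-1}=(W_{n-1},\Tsf_{n-1},Y_{n-1})$ is constructed. I apply Lemma \ref{lem:CL} with the substitutions $M=W_{n-1}$, $N=V$, $\Tsf=\Tsf_{n-1}$, $\Pgot=\Pgot_{n-1}$, $X=Y_{n-1}\in\Nsf(W_{n-1})$, $\Fgot=Y\circ\Pgot_0\circ\cdots\circ\Pgot_{n-2}$ restricted to $\partial W_{n-1}$ (an analytical map, being a composition of the analytic $Y$ with the analytic projections of metric tubular neighborhoods), and $\mu=\mu_{n-1}$; the hypothesis \eqref{eq:CL}, namely $\|Y_{n-1}-\Fgot\|<\mu_{n-1}$ on $\partial W_{n-1}$, is precisely (4$_{n-1}$) (the second term in the max). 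For the parameters of the conclusion I take $\rho=\csf\cdot\tfrac{?}{}$—more precisely I want the new intrinsic radius bound to be $\rho_n$, so I feed Lemma \ref{lem:CL} a value $\rho$ with $\rho_{n-1}<\rho$ and such that the resulting bound $\dist>\rho$ together with (3$_{n-1}$) yields $\dist_{(W_n,Y_n)}(V,\partial W_n)>\rho_n$; examining \eqref{eq:pitag}, the natural choice is $\rho=\rho_n$ if $\dist_{(W_{n-1},Y_{n-1})}$ already exceeds $\rho_{n-1}$ on the inner boundary, which it does by (3$_{n-1}$), so Item (\ref{lem:CL}.c) directly gives (3$_n$). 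For $\epsilon$ I take $\epsilon=\epsilon(n)$ small enough that (i) $\epsilon<1/\lambda$ and, combined with (5$_{n-1}$), Item (\ref{lem:CL}.d) gives (5$_n$); (ii) $\sqrt{4\rho_n^2+\mu_{n-1}^2}+\epsilon<\mu_n$, which is possible by the very definition $\mu_n=\sqrt{\mu_{n-1}^2+4(\csf/n)^2}+\csf/n^2$ once I have also arranged $\rho_n-\rho_{n-1}=\csf/n$ (again forced by \eqref{eq:pitag}), so that Item (\ref{lem:CL}.e) bounds $\|Y_n-\Fgot\circ\Pgot_{n-1}\|=\|Y_n-Y\circ\Pgot_0\circ\cdots\circ\Pgot_{n-1}\|$ by $\mu_n$ on $\partial W_n$, which is the second half of (4$_n$). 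The output of Lemma \ref{lem:CL} gives $\Mcal\in\Bsf(\Rcal)$ and $\Xcal\in\Nsf(\Mcal)$; I set $W_n=\Mcal$ and $Y_n=\Xcal$. Item (\ref{lem:CL}.a) gives $V<W_n<W_{n-1}$ and $\partial W_n\subset\Tsf_{n-1}$; I then choose a metric tubular neighborhood $\Tsf_n$ of $\partial W_n$ with $\Tsf_n\subset\Tsf_{n-1}$ (possible since $\partial W_n\subset\Tsf_{n-1}$ and $\Tsf_{n-1}$ is open), giving (2$_n$). Item (\ref{lem:CL}.b) is exactly (1$_n$).

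The one point needing a little care is the first half of (4$_n$), namely $\|Y-Y\circ\Pgot_0\circ\cdots\circ\Pgot_{n-1}\|<\mu_n$ on $\partial W_n$: this is a statement about the fixed map $Y$ alone and does not involve $Y_n$, so it cannot be extracted from Lemma \ref{lem:CL}. The remedy is that I have freedom in how thin to take the tubular neighborhoods $\Tsf_j$: since $\partial W_n\subset\Tsf_{n-1}\subset\cdots\subset\Tsf_0$ and each $\Pgot_j$ moves points by at most the radius of $\Tsf_j$, by shrinking the radii of $\Tsf_0,\ldots,\Tsf_{n-1}$ (and hence, by uniform continuity of $Y$ on $\overline{W_0}$, controlling $\|Y-Y\circ\Pgot_0\circ\cdots\circ\Pgot_{n-1}\|$) one forces this quantity below $\mu_n$. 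Since the radii are chosen one at a time as the induction proceeds, there is no circularity: at stage $n$ the radii of $\Tsf_0,\ldots,\Tsf_{n-1}$ are already fixed, but they were chosen small enough in advance to accommodate all future constraints—alternatively, one incorporates this into the choice of $\Tsf_{n-1}$ at the previous stage. This bookkeeping of the tubular radii is the main technical obstacle; everything else is a direct application of Lemma \ref{lem:CL} with the constants \eqref{eq:pitag} rigged precisely so that the Pythagorean bound in (\ref{lem:CL}.e) telescopes, and \eqref{eq:csf} guarantees all the $\mu_n$ stay below $1/(2\lambda)$, which will be what ultimately yields Items (\ref{lem:ML}.d) and (\ref{lem:ML}.e) when the claim is used to prove Lemma \ref{lem:ML}.
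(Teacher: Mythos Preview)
Your plan has a genuine gap in the choice of the parameter $\rho$ fed to Lemma~\ref{lem:CL}. You set $\rho=\rho_n$ (the cumulative value) so that (\ref{lem:CL}.c) with $N=V$ gives (3$_n$) directly. That part is fine, but then (\ref{lem:CL}.e) yields
\[
\|Y_n-\Fgot\circ\Pgot_{n-1}\|<\sqrt{4\rho_n^2+\mu_{n-1}^2}+\epsilon \quad\text{on }\partial W_n,
\]
and you claim this can be made $<\mu_n$ ``by the very definition $\mu_n=\sqrt{\mu_{n-1}^2+4(\csf/n)^2}+\csf/n^2$ once I have also arranged $\rho_n-\rho_{n-1}=\csf/n$.'' This is false: the bound in (\ref{lem:CL}.e) involves $4\rho^2$ with $\rho$ the \emph{input} to the lemma, not $4(\rho_n-\rho_{n-1})^2$. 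Since $\rho_n=\rho_1+\sum_{j=2}^n \csf/j\to\infty$, the quantity $\sqrt{4\rho_n^2+\mu_{n-1}^2}$ diverges, while by \eqref{eq:csf} every $\mu_n$ must stay below $1/(2\lambda)$. So the inequality $\sqrt{4\rho_n^2+\mu_{n-1}^2}+\epsilon<\mu_n$ is impossible for large $n$, and the second half of (4$_n$) fails.

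The paper's remedy is to feed Lemma~\ref{lem:CL} only the \emph{increment} $\rho=\csf/n$; then (\ref{lem:CL}.e) gives exactly $\sqrt{4(\csf/n)^2+\mu_{n-1}^2}+\epsilon<\mu_n$ for $\epsilon<\csf/n^2$, and the recursion \eqref{eq:pitag} telescopes as intended. The price is that (\ref{lem:CL}.c) now only yields $\csf/n<\dist_{(W_n,Y_n)}(N,\partial W_n)$, which is not (3$_n$). To recover (3$_n$) the paper does \emph{not} take $N=V$ but rather $N=W'_n$, a region with $V<W'_n<W_{n-1}$ chosen close enough to $W_{n-1}$ that $\|Y-Y\circ\Pgot_0\circ\cdots\circ\Pgot_{n-1}\|<\mu_n$ holds on all of $\overline{W_{n-1}-W'_n}\supset\partial W_n$. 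Then (3$_n$) is obtained by splitting any path from $V$ to $\partial W_n$ at its first crossing of $\partial W'_n$: the inner portion has length $>\rho_{n-1}$ by (3$_{n-1}$), the closeness of $W'_n$ to $W_{n-1}$, and the $\Ccal^1$-closeness of $Y_n$ to $Y_{n-1}$ on $W'_n$ from (\ref{lem:CL}.d); the outer portion has length $>\csf/n$ by (\ref{lem:CL}.c). This choice of $N=W'_n$ also delivers the first half of (4$_n$) for free, so the ``bookkeeping of tubular radii'' you worry about becomes unnecessary.
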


\begin{proof}
Let us follow an inductive method. Take $W_1\in\Bsf(\Rcal),$ $V<W_1<W_0,$ close enough to $W_0$ so that $\partial W_1\subset\Tsf_0$ and 
\begin{equation}\label{eq:W_1}
\|Y-Y\circ\Pgot_0\|<\mu_1\quad \text{on }\overline{W_0-W_1}.
\end{equation}
For the basis of the induction choose $W_1$ and $Y_1=Y|_{W_1}.$ 
This gives (4$_1$). (5$_1$) trivially holds. Assume that $\rho_1$ is small enough so that (3$_1$) is satisfied. Finally, up to an infinitesimal variation of $\partial W_1$ we can guarantee that (1$_1$) holds as well.  Then choose $\Tsf_1$ any metric tubular neighborhood of $\partial W_1$ such that $\Tsf_1 \subset \Tsf_0$  and set $\Xi_1=(W_1,\Tsf_1,Y_1).$

For the inductive step, assume that $\Xi_1,\ldots, \Xi_{n-1}$ have been already constructed satisfying the desired properties, $n\geq 2.$ Take $W'_{n}\in\Bsf(\Rcal),$ $V<W'_{n}<W_{n-1},$ close enough to $W_{n-1}$ so that $\partial W'_{n}\subset\Tsf_{n-1}$ and
\begin{equation}\label{eq:W_n}
\|Y-Y\circ\Pgot_0\circ \ldots \circ \Pgot_{n-1}\|<\mu_n \quad \text{on}\quad \overline{W_{n-1}-W'_{n}}.
\end{equation}
Here we have taken into account that $\Pgot_{n-1}|_{\partial W_{n-1}}$ is the identity map, (4$_{n-1}$) and the inequality $\mu_{n-1}<\mu_n.$ 
Then set $(W_n,Y_n)$ as the couple $(\Mcal,\Xcal)$ obtained by Lemma \ref{lem:CL} applied to the data
\[
M=W_{n-1},\quad \Tsf_{n-1},\quad \Pgot=\Pgot_{n-1},\quad \mu=\mu_{n-1},\quad X=Y_{n-1},\]
\[\Fgot=(Y\circ\Pgot_0\circ \ldots \circ \Pgot_{n-2})|_{\partial W_{n-1}},\quad  N=W'_{n},\quad \rho=\frac{\csf}{n}\quad\text{and}\quad \epsilon<\frac{\csf}{n^2},
\]
and choose $\Tsf_n$ any metric tubular neighborhood of $\partial W_n$ such that $\Tsf_{n} \subset \Tsf_{n-1}.$
Then (\ref{lem:CL}.b), (\ref{lem:CL}.a) and (\ref{lem:CL}.c)  give (1$_n$), (2$_n$) and (3$_n$), respectively. (4$_n$) follows from (\ref{lem:CL}.e) and \eqref{eq:W_n} (see also \eqref{eq:pitag}), and finally (\ref{lem:CL}.d) and (5$_{n-1}$) give (5$_n$) for $\epsilon$ small enough.
\end{proof}

Since $\{\rho_n\}_{n\in\n}\nearrow +\infty$ then there exists $k\in\n$ such that $\rho_k>\lambda.$ Define $\Wcal=W_k$ and $\Ycal=Y_k.$ Properties (\ref{lem:ML}.a), (\ref{lem:ML}.b), (\ref{lem:ML}.c) and (\ref{lem:ML}.f) trivially follow from (1$_k$), (2$_k$), (3$_k$) and (5$_k$), respectively. By (4$_k$)  and \eqref{eq:csf} one has
\[
\|\Ycal-Y\|\leq \|Y_k-Y\circ\Pgot_0\circ \ldots \circ \Pgot_{k-1}\|+\|Y\circ\Pgot_0\circ \ldots \circ \Pgot_{k-1}-Y\|<\mu_k+\mu_k<\frac1{\lambda}\quad \text{on }\partial \Wcal.
\]
Then, by the Maximum Principle for harmonic maps $\|\Ycal-Y\|<1/\lambda$ on $\Wcal$ proving (\ref{lem:ML}.d). Finally, (\ref{lem:ML}.e) follows from the same argument.
The proof of Lemma \ref{lem:ML} is done.


\section{Main Theorem}\label{sec:th}

We are now ready to state and prove the main result of this paper. Observe that Main Theorem in the introduction directly follows from the following 

\begin{theorem}\label{th:fun}
Let $M,N\in\Bsf(\Rcal),$ $N<M.$ Let $X\in\Nsf(M)$ and let $\epsilon>0.$

Then there exist a relatively compact domain $\Dcal\subset \Rcal$ and a compact complete null curve $\Xcal:\overline{\Dcal}\to\c^3$ satisfying that
\begin{enumerate}[{\rm (\ref{th:fun}.a)}]
\item $\Xcal|_{Fr(\Dcal)}:Fr(\Dcal)\to\c^3$ is an embedding,
\item $N\subset\Dcal\subset\overline{\Dcal}\subset M^\circ,$ $\Dcal$ is homeomorphic to $\Rcal$ and both $\Dcal$ and $\overline{\Dcal}$ are Runge,
\item $\|\Xcal-X\|_1<\epsilon$ on $N$ and $\|\Xcal-X\|<\epsilon$ on $\overline{\Dcal},$
\item $\delta^H(\Xcal(Fr(\Dcal)),X(\partial M))<\epsilon,$ and
\item the Hausdorff dimension of $\Xcal(Fr(\Dcal))$ is  $1.$
\end{enumerate}
\end{theorem}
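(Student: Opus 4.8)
\textbf{Proof plan for Theorem \ref{th:fun}.}

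The plan is to iterate Lemma \ref{lem:ML} to produce a sequence of bordered regions $\{W_n\}_{n\in\n}\subset\Bsf(\Rcal)$ together with null curves $Y_n\in\Nsf(W_n)$, so that the nested union $\Dcal=\bigcup_n W_n$ carries the desired compact complete null curve $\Xcal$ as a suitable limit. First I would set $W_0=M$, $Y_0=X$, fix a metric tubular neighborhood of $\partial M$ disjoint from $\overline{N}$, and then apply Lemma \ref{lem:ML} recursively with a rapidly decaying sequence of parameters $\lambda_n$. At step $n$ one feeds into the lemma the data $(V,W,Y,\lambda)=(W_{n-1}',W_{n-1},Y_{n-1},\lambda_n)$, where $W_{n-1}'$ is an auxiliary slightly smaller region chosen so that $N<W_{n-1}'<W_{n-1}$ and so that $Y_{n-1}$ does not change much between $W_{n-1}'$ and $\partial W_{n-1}$; this is exactly the bookkeeping device already used in Claim \ref{cl:ML}. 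The output $(\Wcal,\Ycal)=(W_n,Y_n)$ then satisfies $W_{n-1}'<W_n<W_{n-1}$, $\lambda_n<\dist_{(W_n,Y_n)}(W_{n-1}',\partial W_n)$, $\|Y_n-Y_{n-1}\|<1/\lambda_n$ on $W_n$, $\delta^H(Y_n(\partial W_n),Y_{n-1}(\partial W_{n-1}))<1/\lambda_n$, and $\|Y_n-Y_{n-1}\|_1<1/\lambda_n$ on $W_{n-1}'\supset N$.

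The convergence and topological parts are then routine from standard Calabi--Yau-type arguments. Choosing $\sum_n 1/\lambda_n$ small and $\sum_n 1/\lambda_n<\epsilon$ forces the maps $Y_n$ to converge uniformly on compact subsets of $\Dcal:=\bigcup_n W_n$ to a holomorphic map $\Xcal_0$; since the $\Ccal^1$-estimates on $W_{n-1}'$ are also summable and the approximation is in the $\Ccal^1$-topology, the limit is a genuine null curve (the condition $\doble{d\Xcal_0,d\Xcal_0}\neq0$ is open and survives the limit, using that the Hermitian energy is bounded away from zero on each fixed compact piece). Completeness: any divergent path in $\Dcal$ must eventually leave every $W_{n-1}'$, hence cross infinitely many ``shells'' $\overline{W_{n-1}-W_n'}$; the intrinsic-distance estimate $\rho_n\to\infty$ accumulated through the telescoping (as in \eqref{eq:pitag}) forces the $\Xcal_0$-length of the path to diverge. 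Also $\Dcal$ is an increasing union of Runge bordered regions each homeomorphic to $\Rcal$, so $\Dcal\cong\Rcal$ and both $\Dcal$ and $\overline{\Dcal}$ are Runge by construction, giving (\ref{th:fun}.b); and $\|\Xcal_0-X\|<\epsilon$ on $\Dcal$, $\|\Xcal_0-X\|_1<\epsilon$ on $N$ give (\ref{th:fun}.c).

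The genuinely delicate point is the boundary behavior, i.e. showing that $\Xcal_0$ extends continuously to $\overline{\Dcal}$ with $\Xcal:=\overline{\Xcal_0}$ an embedding on $Fr(\Dcal)$, and that $\Xcal(Fr(\Dcal))$ has Hausdorff dimension $1$ and is $\epsilon$-close in Hausdorff distance to $X(\partial M)$. The Hausdorff-distance bound $\delta^H(Y_n(\partial W_n),Y_{n-1}(\partial W_{n-1}))<1/\lambda_n$ makes $\{Y_n(\partial W_n)\}$ a Cauchy sequence of compact sets, whose limit $L$ is within $\sum 1/\lambda_n<\epsilon$ of $X(\partial M)$; the real work is to identify $L$ with $\Xcal(Fr(\Dcal))$ and to control its size. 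One arranges that the frontier $Fr(\Dcal)=\overline{\Dcal}-\Dcal$ is covered, at level $n$, by the annular shells $\overline{W_{n-1}-W_n}$, on each of which $\Xcal_0$ oscillates by at most $O(1/\lambda_n)$ (from the uniform estimates), so $Fr(\Dcal)$ is covered by controllably few sets of controllably small diameter; choosing the $\lambda_n$ to decay fast enough relative to the number of shell components (which is $\esf$, fixed) gives $\sum (\mathrm{diam})^{1+\delta}<\infty$ for every $\delta>0$, forcing Hausdorff dimension $\le 1$, while the dimension is clearly $\ge1$ since $\Xcal_0|_{\partial W_n}$ is a nondegenerate embedded curve for each $n$. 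Finally, the embeddedness of $\Xcal$ on $Fr(\Dcal)$ is obtained by invoking (\ref{lem:ML}.a) at every stage and making the perturbations small enough that the embeddings $Y_n|_{\partial W_n}$ remain embeddings in the limit after matching boundary components across levels — this transversality/injectivity propagation is the step I expect to require the most care, since one must show both that distinct frontier points have distinct images and that no frontier point is hit from the interior.
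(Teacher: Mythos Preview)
Your overall strategy—iterating Lemma \ref{lem:ML} to build a shrinking sequence $W_n$ and taking the limit—is exactly what the paper does. Two points require substantial correction, however.

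First, your Hausdorff-dimension argument does not work. The $\esf$ annular shells $\overline{W_{n-1}-W_n}$ do \emph{not} have image of small diameter: the oscillation of $\Xcal$ on a shell is comparable to the diameter of the boundary curve $X_n(\partial W_n)$, which stays bounded below (it approximates $X(\partial M)$). What the paper does instead is to record, at each stage $n$, the Euclidean length $\tau_n\ge\Lcal_{\c^3}(X_n(\partial M_n))$ and place $a_n=\esf\cdot E(\tau_n^{\,n+1})$ equally spaced points $x_{n,1},\dots,x_{n,a_n}$ on $X_n(\partial M_n)$; these lie within $(1/\tau_n)^n$ of every point of that boundary image, and one then forces the later perturbations to be so small (specifically $\epsilon_m<1/(m^2\tau_{m-1}^{\,m})$) that $\Xcal(Fr(\Dcal))$ remains within $2(1/\tau_n)^n$ of $\{x_{n,j}\}$. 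The upper bound follows from $a_n\cdot(2/\tau_n^{\,n})^{1+1/n}<4\esf$. Your lower-bound claim is also off: $\partial W_n\not\subset Fr(\Dcal)$, so embeddedness of $Y_n|_{\partial W_n}$ says nothing directly; instead one uses that each of the $\esf$ components of $\Xcal(Fr(\Dcal))$ is a nondegenerate continuum (being Hausdorff-close to a component of $X(\partial M)$), hence has dimension $\ge1$.

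Second, for the injectivity of $\Xcal|_{Fr(\Dcal)}$ the paper introduces a quantitative device: for $K$ compact, $f:K\to\c^3$ continuous and injective, and $k\in\n$, set
\[
\Psi(K,f,k)=\frac{1}{2k^2}\inf\big\{\|f(P)-f(Q)\|:P,Q\in K,\ \dist_{(\Rcal,\omega)}(P,Q)>\tfrac1k\big\}>0.
\]
At stage $n$ one requires $\epsilon_n<\Psi(\overline{\Tsf}_{n-1},X_{n-1}|_{\overline{\Tsf}_{n-1}},n)$, where $\overline{\Tsf}_{n-1}$ is a thin tubular neighborhood of $\partial M_{n-1}$ on which $X_{n-1}$ is an embedding (not merely on the boundary curve itself). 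Since $Fr(\Dcal)\subset\overline{\Tsf}_{n-1}$ for every $n$, given distinct $P,Q\in Fr(\Dcal)$ one picks $n_0$ with $\dist_{(\Rcal,\omega)}(P,Q)>1/n_0$ and obtains the telescoping estimate $\|\Xcal(P)-\Xcal(Q)\|\ge\prod_{n>n_0}(1-1/n^2)\cdot\|X_{n_0}(P)-X_{n_0}(Q)\|>0$. Merely knowing that each $Y_n|_{\partial W_n}$ is an embedding is not enough, since $Fr(\Dcal)$ coincides with none of the $\partial W_n$.

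A minor bookkeeping point: your $W_n$ are decreasing, so $\bigcup_n W_n=W_1$. In the paper $\Dcal$ is the increasing union of the inner regions $(M_n-\Tsf_n)^\circ$ (roughly your $W_n'$), and $\overline{\Dcal}=\bigcap_n M_n$.
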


Through the proof of Theorem \ref{th:fun} we will need the following notation. For any $k\in\n,$ any compact set $K\subset\Rcal$ and any continuous injective map $f:K\to\c^3,$ set
\[
\Psi(K,f,k)=\frac{1}{2k^2}\cdot\inf\big\{\|f(P)-f(Q)\|\;\big|\; P,Q\in K,\; \dist_{(\Rcal,\omega)}(P,Q)>\frac1{k}\big\},
\]
where $\dist_{(\Rcal,\omega)}(\cdot,\cdot)$ means intrinsic distance with respect to the Riemannian metric $\omega$ (see Remark \ref{re:prime}). 
Notice that $\Psi(K,f,k)>0.$ The operator $\Psi$ will be useful in order to guarantee that $\Xcal|_{Fr(\Dcal)}:Fr(\Dcal)\to\c^3$ is injective.

\subsection{Proof of Theorem \ref{th:fun}}

Let $\csf$ be a positive constant which will be specified later. Let $\epsilon_1>0$ and let $M_1\in\Bsf(\Rcal)$ satisfying that
\begin{enumerate}[{\rm (i)}]
\item $N<M_1<M,$ 
\item $X|_{\partial M_1}:\partial M_1\to\c^3$ is an embedding,
\item $\delta^H(X(\partial M_1),X(\partial M))<\epsilon_1,$ and
\item $\epsilon_1<\csf/2.$ 
\end{enumerate}
Such $M_1$ exists by a continuity argument (Items (i) and (iii)). For Item (ii), make an infinitesimal variation of $\partial M_1$ if necessary. Set $X_1:=X|_{M_1}$ and let us prove the following

\begin{claim}\label{cl:th}
There exists a sequence $\{\Theta_n=(M_n,X_n,\Tsf_n,\epsilon_n,\tau_n)\}_{n\in\n},$ where $M_n\in\Bsf(\Rcal),$ $X_n\in\Nsf(M_n),$ $\Tsf_n$ is a metric tubular neighborhood of $\partial M_n$ in $M_n,$  $0<\epsilon_n<\csf/2^n,$ and $\tau_n> 0,$ satisfying that
\begin{enumerate}[{\rm (1$_n$)}]
\item $X_n|_{\overline{\Tsf}_n}:\overline{\Tsf}_n\to\c^3$ is an embedding,
\item $N<M_{n-1}-\Tsf_{n-1}<M_n-\Tsf_n<M_n<M_{n-1}<M,$ $n\geq 2,$
\item $\|X_n-X_{n-1}\|_1<\epsilon_n$ on $M_{n-1}-\Tsf_{n-1}$ and $\|X_n-X_{n-1}\|<\epsilon_n$ on $M_n,$ $n\geq 2,$
\item $\dist_{(M_n,X_n)}(N,\Tsf_n)>1/\epsilon_n,$ $n\geq 2,$
\item $\delta^H(X_n(\partial M_n),X_{n-1}(\partial M_{n-1}))<\epsilon_n,$ $n\geq 2,$
\item there exist $a_n:=\esf\cdot E((\tau_n)^{n+1})$ points $x_{n,1},\ldots, x_{n,a_n}$ in $X_n(\partial M_n)\subset\c^3$ such that
\[
\dist_{\c^3}(X_n(\overline{\Tsf}_n),\{x_{n,1},\ldots, x_{n,a_n}\})<\big(1/{\tau_n}\big)^n,
\]
where $E(\cdot)$ and $\dist_{\c^3}(\cdot)$ means integer part and Euclidean distance in $\c^3,$ respectively,
\item $\displaystyle\epsilon_n<\min\left\{\epsilon_{n-1}, \varrho_{n-1},\frac1{n^2(\tau_{n-1})^n}\,,\,\Psi\big( \overline{\Tsf}_{n-1}\,,\,X_{n-1}|_{\overline{\Tsf}_{n-1}}\,,\, n \big)\right\},$ where $$\varrho_{n-1}=2^{-n} \min \left\{  \min_{M_{k-1}-\Tsf_{k-1}} \big\| \frac{d X_k}{\omega}\big\|\,|\; k=1,\ldots,n-1 \right\}>0,\; \;n\geq 2,$$
\item $\tau_n\geq \tau_{n-1}+1\geq n,$ $n\geq 2,$ and
\item $\max\{\dist_{(\Rcal,\omega)}(P,\partial M_n)\;|\; P\in \overline{\Tsf}_n\}<\epsilon_n.$
\end{enumerate}
\end{claim}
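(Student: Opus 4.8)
\textbf{Proof plan for Claim \ref{cl:th}.}
The plan is to construct the sequence $\{\Theta_n\}_{n\in\n}$ by induction on $n$, using Lemma \ref{lem:ML} as the engine that produces each new pair $(M_n,X_n)$ from the previous one. The induction starts at $n=1$ with $M_1$, $X_1$ already fixed above; for the base step we only need to choose $\Tsf_1$ a metric tubular neighborhood of $\partial M_1$ thin enough that $X_1|_{\overline{\Tsf}_1}$ is an embedding (possible by Item (ii) and a continuity argument), then pick $\epsilon_1$, and $\tau_1\geq 1$ large enough so that $(6_1)$ holds; the latter is just the statement that the compact set $X_1(\overline{\Tsf}_1)$ is totally bounded, so it admits an $(1/\tau_1)$-net of $\esf\cdot E(\tau_1^2)$ points taken on $X_1(\partial M_1)$ after slightly enlarging $\tau_1$. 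Note $(7_1)$, $(2_1)$, $(3_1)$, $(4_1)$, $(5_1)$ are vacuous.

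For the inductive step, suppose $\Theta_1,\ldots,\Theta_{n-1}$ are built. First I would fix $\epsilon_n>0$ small enough to satisfy $(7_n)$ — note that every quantity on the right-hand side of $(7_n)$ depends only on the already-constructed data $\Theta_1,\ldots,\Theta_{n-1}$, and each is strictly positive: $\varrho_{n-1}>0$ because $dX_k/\omega$ is continuous and nonvanishing on the compact set $M_{k-1}-\Tsf_{k-1}$ (recall $X_k$ is an immersion), and $\Psi(\overline{\Tsf}_{n-1},X_{n-1}|_{\overline{\Tsf}_{n-1}},n)>0$ as remarked after the definition of $\Psi$. Next, apply Lemma \ref{lem:ML} with the data $V=M_{n-1}-\Tsf_{n-1}$, $W=M_{n-1}$ (note $V<W$ since $M_{n-1}-\Tsf_{n-1}\subset (M_{n-1})^\circ$ by $(9_{n-1})$ and the structure of a metric tubular neighborhood), $Y=X_{n-1}$, and $\lambda=\max\{1/\epsilon_n,\,n\}$ suitably large; this yields $\Wcal\in\Bsf(\Rcal)$ and $\Ycal\in\Nsf(\Wcal)$ with $M_{n-1}-\Tsf_{n-1}<\Wcal<M_{n-1}$, $\Ycal|_{\partial\Wcal}$ an embedding, $\lambda<\dist_{(\Wcal,\Ycal)}(M_{n-1}-\Tsf_{n-1},\partial\Wcal)$, $\|\Ycal-X_{n-1}\|<1/\lambda$ on $\Wcal$, $\delta^H(\Ycal(\partial\Wcal),X_{n-1}(\partial M_{n-1}))<1/\lambda$, and $\|\Ycal-X_{n-1}\|_1<1/\lambda$ on $M_{n-1}-\Tsf_{n-1}$. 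Set $M_n=\Wcal$ and $X_n=\Ycal$. Then $(2_n)$ follows from the nesting (choosing $\Tsf_n$ below), $(3_n)$ from the $1/\lambda\le\epsilon_n$ estimates, $(4_n)$ from the intrinsic-distance bound (since $\dist_{(M_n,X_n)}(N,\Tsf_n)\geq\dist_{(M_n,X_n)}(M_{n-1}-\Tsf_{n-1},\partial M_n)>\lambda\geq1/\epsilon_n$, using $N\subset M_{n-1}-\Tsf_{n-1}$ and $\Tsf_n$ a neighborhood of $\partial M_n$), and $(5_n)$ from the Hausdorff-distance bound.

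It remains to choose $\Tsf_n$ and $\tau_n$. Pick $\Tsf_n$ a metric tubular neighborhood of $\partial M_n$ of radius so small that simultaneously: $(1_n)$ holds (i.e. $X_n|_{\overline{\Tsf}_n}$ is an embedding, possible since $X_n|_{\partial M_n}$ is an embedding and $X_n$ is an immersion), the nesting $M_{n-1}-\Tsf_{n-1}<M_n-\Tsf_n$ required in $(2_n)$ holds, and $(9_n)$ holds. Finally choose $\tau_n\geq\max\{\tau_{n-1}+1,\,n\}$ — which gives $(8_n)$ — and, enlarging it further if necessary, so that $(6_n)$ holds: again $X_n(\overline{\Tsf}_n)$ is a compact subset of $\c^3$, hence totally bounded, so for any prescribed mesh it has a finite net; since the net points can be perturbed onto the embedded curve $X_n(\partial M_n)$ (which is within Hausdorff distance $<(1/\tau_n)^n$ of $X_n(\overline{\Tsf}_n)$ once $\Tsf_n$ is thin, by $(9_n)$ and uniform continuity of $X_n$), and since $a_n=\esf\cdot E(\tau_n^{n+1})\to\infty$, increasing $\tau_n$ makes $a_n$ as large as needed to cover $X_n(\overline{\Tsf}_n)$ at scale $(1/\tau_n)^n$. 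This closes the induction.

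The main obstacle is the bookkeeping in the final step: one must verify that the \emph{same} choice of $\Tsf_n$ (its radius) can be taken small enough to meet $(1_n)$, the $(2_n)$-nesting and $(9_n)$ at once, \emph{while} still leaving room to pick $\tau_n$ satisfying the two competing requirements $(8_n)$ and $(6_n)$ — the latter requires $\Tsf_n$ thin relative to $\tau_n$, yet $\tau_n$ must also exceed $n$ and grow. The point is that $(6_n)$ can always be arranged \emph{after} $\Tsf_n$ is fixed, by taking $\tau_n$ large: thinness of $\Tsf_n$ only helps (it shrinks $X_n(\overline{\Tsf}_n)$), so there is no genuine circularity, but this ordering of the choices — $\epsilon_n$, then $(M_n,X_n)$ via Lemma \ref{lem:ML}, then $\Tsf_n$, then $\tau_n$ — must be spelled out carefully to avoid an apparent loop.
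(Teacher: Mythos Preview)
Your overall strategy---induction on $n$, with each new $(M_n,X_n)$ produced by Lemma~\ref{lem:ML} applied to $V=M_{n-1}-\Tsf_{n-1}$, $W=M_{n-1}$, $Y=X_{n-1}$, $\lambda=1/\epsilon_n$---is exactly what the paper does, and your verification of $(2_n)$--$(5_n)$ and $(7_n)$ is correct.

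There is, however, a genuine error in your treatment of $(6_n)$, and it lies precisely in the ordering you defend in the last paragraph. You propose to fix $\Tsf_n$ first and then satisfy $(6_n)$ by \emph{enlarging} $\tau_n$. This cannot work. Once $\Tsf_n$ is fixed, $X_n(\overline{\Tsf}_n)$ is a two-dimensional compact set whose Hausdorff distance to the one-dimensional curve $X_n(\partial M_n)$ is some fixed number $w>0$ (essentially the image-width of the annulus). Since the net points $x_{n,j}$ must lie on $X_n(\partial M_n)$, no choice of such points---no matter how many---can bring every point of $X_n(\overline{\Tsf}_n)$ within $(1/\tau_n)^n$ of the net unless $(1/\tau_n)^n>w$. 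Enlarging $\tau_n$ makes $(1/\tau_n)^n$ \emph{smaller}, so it makes this constraint harder, not easier. Your sentence ``thinness of $\Tsf_n$ only helps\ldots\ so there is no genuine circularity'' is therefore mistaken: thinness of $\Tsf_n$ helps, but you have already frozen $\Tsf_n$ before you learn how thin it must be.

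The fix is to reverse the order, as the paper does: after obtaining $(M_n,X_n)$, first set
\[
\tau_n=\max\{\tau_{n-1}+1,\ \Lcal_{\c^3}(X_n(\partial M_n))\},
\]
so that $a_n=\esf\cdot E(\tau_n^{n+1})$ equally spaced points on the $\esf$ boundary components of $X_n(\partial M_n)$ give a mesh $<(1/\tau_n)^n$ \emph{along the curve}; then choose $\Tsf_n$ with radius so small that $(1_n)$, $(2_n)$, $(4_n)$, $(9_n)$ hold \emph{and} the image $X_n(\overline{\Tsf}_n)$ lies within $(1/\tau_n)^n$ of $X_n(\partial M_n)$ (a continuity argument, since $\tau_n$ is now fixed). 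Combining the two gives $(6_n)$. The same remark applies to your base case: $\epsilon_1$ is already fixed before the claim, and you should choose $\tau_1$ before $\Tsf_1$, not after.
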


\begin{proof}
Let us follow an inductive process. We have already introduced $M_1,$ $X_1$ and $\epsilon_1,$ hence for setting $\Theta_1$ we only must define $\Tsf(M_1)$ and $\tau_1$ satisfying the corresponding properties. 

Set $\tau_1=\max\{1,\Lcal_{\c^3}(X_1(\partial M_1))\},$ where $\Lcal_{\c^3}(\cdot)$ means Euclidean length of curves in $\c^3.$

Write $\gamma_1,\ldots,\gamma_\esf$ for the components of $\partial M_1.$ For any $i=1\ldots,\esf,$ let $y_{i,1},\ldots,y_{i,a_1/\esf}$ be points on $X_1(\gamma_i)$ with mutual distance along $X_1(\gamma_i)$ and denote by $b_{1,i}$ this distance. Set $b_1=\max\{b_{1,1},\ldots,b_{1,\esf}\}$ and $\{x_{1,1},\ldots, x_{1,a_1}\}=\{y_{i,j}\,|\, i=1,\ldots,\esf,\, j=1,\ldots,a_1/\esf\}.$  Then $b_1<1/\tau_1,$ hence 
\begin{equation}\label{eq:puntosH}
\dist_{\c^3}(X_1(\partial M_1)),\{x_{1,1},\ldots, x_{1,a_1}\})<1/\tau_1.
\end{equation}
By (i), (ii), \eqref{eq:puntosH} and a continuity argument, there exists a tubular neighborhood $\Tsf(M_1)$ of $\partial M_1$ on $M_1$ such that (1$_1$), (6$_1$), (9$_1$) and $N<M_1-\Tsf(M_1)$ (which is the meaningful part of  (2$_1$)) hold. The remaining properties of $\Theta_1$ make no sense.

For the inductive step, assume that we have stated $\Theta_1,\ldots,\Theta_{n-1},$ $n\geq 2,$ satisfying the corresponding properties and let us construct $\Theta_n.$

Observe that (1$_{n-1}$) implies that $\Psi(\overline{\Tsf}_{n-1},X_{n-1}|_{\overline{\Tsf}_{n-1}},n)>0.$ Then we can choose $\epsilon_n<\csf/2^n$ satisfying (7$_n$). Define $M_n=\Wcal\in\Bsf(\Rcal)$ and $X_n=\Ycal\in\Nsf(\Wcal)$ given by Lemma \ref{lem:ML} applied to the data
\[
V=M_{n-1}-\Tsf_{n-1},\quad W=M_{n-1},\quad Y=X_{n-1}\quad\text{and}\quad \lambda=\frac1{\epsilon_n}.
\]

Then one has
\begin{enumerate}[{\rm (a)}]
\item $X_n|_{\partial M_n}:\partial M_n\to\c^3$ is an embedding,
\item $M_{n-1}-\Tsf_{n-1}<M_n<M_{n-1},$
\item $\dist_{(M_n,X_n)}(M_{n-1}-\Tsf_{n-1},\partial M_n)>1/\epsilon_n,$
\item $\|X_n-X_{n-1}\|_1<\epsilon_n$ on $M_{n-1}-\Tsf_{n-1}$ and $\|X_n-X_{n-1}\|<\epsilon_n$ on $M_n,$ and
\item $\delta^H(X_n(\partial M_n),X_{n-1}(\partial M_{n-1}))<\epsilon_n.$
\end{enumerate}

Items (d) and (e) directly give (3$_n$) and (5$_n$). Set $\tau_n=\max\{\tau_{n-1}+1,\Lcal_{\c^3}(X_n(\partial M_n))\}.$ Hence (8$_n$) holds. Write $\xi_1,\ldots,\xi_\esf$ for the components of $\partial M_n.$ Recall that $X_n(\xi_i)$ is a Jordan curve by (a), $i=1,\ldots,\esf.$ For any $i=1,\ldots,\esf$ consider $z_{i,1},\ldots,z_{i,a_n/\esf}$ points in $X_n(\xi_i)$ with mutual distance along $X_n(\xi_i)$ and denote by $b_{n,i}$ this distance. Set $b_n=\max\{b_{n,i}\}_{i=1}^\esf$ and $\{x_{n,1},\ldots,x_{n,a_n}\}=\{z_{i,j}\,|\,i=1,\ldots,\esf,\, j=1,\ldots,a_n/\esf\}.$ Then $b_n<1/(\tau_n)^n,$ hence
\begin{equation}\label{eq:tau_n}
\dist_{\c^3}(X_n(\partial M_n),\{x_{n,1},\ldots,x_{n,a_n}\})<\big(1/{\tau_n}\big)^n.
\end{equation}

Take a metric tubular neighborhood $\Tsf_n$ of $\partial M_n$ in $M_n.$  By a continuity argument and choosing $\Tsf_n$ small enough, all the properties from  (1$_n$) to (9$_n$) hold. Indeed, (9$_n$) can be trivially guaranteed, and for (1$_n$); (2$_n$); (4$_n$); and (6$_n$), take into account  (a); (2$_{n-1}$) and (b); (c); and \eqref{eq:tau_n}, respectively.
This proves the claim.
\end{proof}

Set $\Ncal_n=(M_n-\Tsf_n)^\circ$ and $\Scal_n=\Rcal-M_n$ for all $n\in \n.$ Define
\[
\Dcal:=\bigcup_{n\in \n} \Ncal_n   
\]
and notice that, by (2$_n$) and (9$_n$), $n\in\n,$ one has
\[
\overline{\Dcal}=\bigcap_{n\in\n} M_n.
\]
By (2$_n$), $n\in\n,$ and elementary topological arguments one has that $\Dcal$ is Runge and homeomorphic to $\Rcal.$ On the other hand $\Scal_n$ consists of $\esf$ pairwise disjoint open annuli and $\Scal_n\subset\Scal_{n+1}\subset\Rcal-N$ $\forall n\in\n,$ hence $\Rcal-\overline{\Dcal}=\cup_{n\in\n} \Scal_n$ consists of $\esf$ pairwise disjoint open annuli as well. This implies that $\overline{\Dcal}$ is a Runge compact set. Then Item (\ref{th:fun}.b) follows from (2$_n$), $n\in\n.$  

By (2$_n$), (3$_n$) and (7$_n$), $n\in\n,$ $\{X_n|_{\overline{\Dcal}}\}_{n\in\n}$ uniformly converges to a continuous map 
\[
\Xcal:\overline{\Dcal}\to\c^3
\]
with $\|\Xcal-X\|_1<\csf$ on $N$ and $\|\Xcal-X\|<\csf$ on $\overline{\Dcal}.$ This, (iii) and (iv) give (\ref{th:fun}.c) and (\ref{th:fun}.d) provided that $\csf<\epsilon$ from the beginning. 
By (3$_n$), $n\in\n,$ $\Xcal|_{\Dcal}$ is a holomorphic map and $\curvo{d\Xcal,d\Xcal}=0$ on $\Dcal.$ To check that $\doble{d\Xcal,d\Xcal}$ never vanishes on $\Dcal$ argue as follows. Fix $P\in\Dcal$ and take $n_0\in\n$ such that $P\in M_{n-1}-\Tsf_{n-1}$ $\forall n\geq n_0.$ Then
\begin{eqnarray*}
\big\|\frac{d\Xcal}{\omega}\big\|(P) & \geq & \big\|\frac{dX_{n_0}}{\omega}\big\|(P)-\sum_{n>n_0}\|X_n-X_{n-1}\|_1\\
& \geq & \big\|\frac{dX_{n_0}}{\omega}\big\|(P)-\sum_{n>n_0}\epsilon_n
\\
 & \geq & \big\|\frac{dX_{n_0}}{\omega}\big\|(P)-\sum_{n>n_0}\varrho_{n-1}\\
 & \geq & \big( 1- \sum_{n>n_0}2^{-n}\big)\big\|\frac{dX_{n_0}}{\omega}\big\|(P),
 \end{eqnarray*}
where we have used (3$_n$) and (7$_n$). Therefore
\begin{eqnarray}\label{eq:ultima}
\big\|\frac{d\Xcal}{\omega}\big\|(P) & \geq & \frac12 \big\|\frac{dX_{n_0}}{\omega}\big\|(P)>0,
\end{eqnarray}
In particular $\doble{d\Xcal,d\Xcal}(p)\neq 0$ and $\Xcal|_{\Dcal}$ is an immersion. This proves that $\Xcal|_{\Dcal}$ is a null curve.

From (4$_n$), $n\in\n,$ and \eqref{eq:ultima}, one has
\[
\dist_{(\Dcal,\Xcal)}(N,Fr(\Dcal))=\lim_{n\to\infty} \dist_{(\Dcal,\Xcal)}(N,\Tsf_n)
\geq \frac12 \lim_{n\to\infty} \dist_{(M_n,X_n)}(N,\Tsf_n)>\frac12\lim_{n\to\infty}\frac1{\epsilon_n}=\infty,
\]
hence $\Xcal|_{\Dcal}$ is complete and $\Xcal:\overline{\Dcal}\to\c^3$ is a compact complete null curve as claimed. 

Let us show that $\Xcal|_{Fr(\Dcal)}:Fr(\Dcal)\to\c^3$ is injective. Indeed, take points $P$ and $Q$ in $Fr(\Dcal)\subset \Tsf_n$ (see (2$_n$)), $n\in\n,$ $P\neq Q.$ Take $n_0\in\n$ large enough so that $\dist_{(\Rcal,\omega)}(P,Q)>1/n_0.$ Then, for any $n>n_0,$
\begin{eqnarray*}
\|X_{n-1}(P)-X_{n-1}(Q)\| & \leq & \|X_{n-1}(P)-X_{n}(P)\| + \|X_{n}(P)-X_{n}(Q)\| + \|X_{n}(Q)-X_{n-1}(Q)\|
\\
 & < & \epsilon_n + \|X_{n}(P)-X_{n}(Q)\| + \epsilon_n
\\
 & < & \frac{1}{n^2} \|X_{n-1}(P)-X_{n-1}(Q)\| + \|X_{n}(P)-X_{n}(Q)\|,
\end{eqnarray*}
where we have used (3$_n$), (7$_n$) and the definition of $\Psi.$ Then 
\[
\|X_{n_0+k}(P)-X_{n_0+k}(Q)\|>\big(\prod_{n=n_0+1}^{n_0+k}\big(1-\frac1{n^2}\big)\big)\|X_{n_0}(P)-X_{n_0}(Q)\|\quad\forall k\in\n.
\]
Taking limits in this expression as $k$ goes to infinity, $\|\Xcal(P)-\Xcal(Q)\|>\frac12\|X_{n_0}(P)-X_{n_0}(Q)\|>0$ and (\ref{th:fun}.a) holds.

Finally let us check (\ref{th:fun}.e). Take $n\in\n$ with $\sum_{m=n}^\infty 1/m^2<1.$ Then, for any $P\in Fr(\Dcal)$ and any $k>n,$ properties (7$_m$) and (9$_m$), $m=n+1,\ldots, k,$ give
\begin{eqnarray*}
\|X_k(P)-X_{n}(P)\| & \leq & \|X_k(P)-X_{k-1}(P)\|+\ldots+\|X_{n+1}(P)-X_{n}(P)\|
\\
 & < & \frac1{k^2(\tau_{k-1})^k}+\ldots+\frac1{(n+1)^2(\tau_{n})^{n+1}}
 \\
 &<&\big(\sum_{m=n+1}^k \frac1{m^2}\big)\frac1{(\tau_{n})^{n+1}}<\frac1{(\tau_{n})^{n+1}}<\frac1{(\tau_{n})^{n}}.
\end{eqnarray*}
Taking limits in the above inequality as $k$ goes to infinity one has $\|\Xcal(P)-X_n(P)\|<1/{(\tau_{n})^{n}}$ for any large enough $n\in\n.$ Combining this inequality and properties (6$_n$) one obtains that $\dist(\Xcal(Fr(\Dcal)),\{x_{n,1},\ldots,x_{n,a_n}\})<2/{(\tau_n)^n}.$ Since $a_n\cdot(2/{(\tau_n)^n})^{1+1/n}<4\esf$ then the Hausdorff measure $\Hcal^1(\Xcal(Fr(\Dcal)))<\infty,$ and so the Hausdorff dimension of $\Xcal(Fr(\Dcal))$ is at most $1.$ On the other hand, if $\csf$ is taken small enough from the beginning, then each of the $\esf$ connected components of $\Xcal(Fr(\Dcal))$ contains more than one point (see Item (\ref{th:fun}.c)). Therefore the  Hausdorff dimension of $\Xcal(Fr(\Dcal))$ is at least $1,$ hence equals to $1.$ 
This proves  (\ref{th:fun}.e) and the theorem. 


\subsection{Applications to other ambient manifolds}\label{sec:coros}

Given an open Riemann surface $\Ncal,$ a map $\varphi:\Ncal\to{\rm SL}(2,\c)$ is said to be a null curve if and only if $\varphi$ is a holomorphic immersion and $\det(d\varphi)=0.$ 
The hyperbolic 3-space $\H^3$ can be identified to ${\rm SL}(2,\c)/{\rm SU}(2),$ and Bryant's projection 
\[
\Bcal:{\rm SL}(2,\c)\to \H^3={\rm SL}(2,\c)/{\rm SU}(2),\quad \Bcal(A)=A\cdot \overline{A}^T,
\]
maps complete null curves in ${\rm SL}(2,\c)$ into conformal complete immersions of constant mean curvature $1$ in $\H^3.$ 

Our Main Theorem directly implies  similar existence results for complex curves in $\c^2,$  minimal surfaces in $\r^3,$ null curves in ${\rm SL}(2,\c)$ and Bryant surfaces in $\h^3$ of arbitrary finite topology (see \cite{MUY1,MUY2,AL2} and the references therein for more details). All of them  have been compiled in the following corollary.

\begin{corollary}\label{co:consecuencias}
Let $S$ be an open orientable surface of finite topology and $\Mgot\in\{\c^2,\r^3,{\rm SL}(2,\c),\h^3\}.$

Then there exist an open Riemann surface $\Mcal,$ a relatively compact domain $\Dcal\subset\Mcal$ such that $\Dcal$ is homeomorphic to $S$ and both $\Dcal$ and $\overline{\Dcal}$ are Runge subsets in $\Mcal,$ and a continuous map $X:\overline{\Dcal}\to\Mgot$ satisfying that $X|_{Fr(\Dcal)}:Fr(\Dcal)\to\Mgot$ is an embedding, the Hausdorff dimension of $X(Fr(\Dcal))$ is  $1$ and either of the following statements:
\begin{enumerate}[{\rm (i)}]
\item $\Mgot=\c^2$ and $X|_\Dcal$ is a complete holomorphic immersion. Furthermore, if $\epsilon>0$ and $\Sigma$ is a finite family of closed curves in $\c^2$ which is spanned by a compact complex curve in $\c^2$ which is homeomorphic to $S$ and can be lifted to a null curve in $\c^3,$ then $X$ and $\Dcal$ can be chosen so that $\delta_{\c^2}^H(\Sigma,X(Fr(\Dcal)))<\epsilon,$ where $\delta_{\c^2}^H(\cdot,\cdot)$ means Hausdorff distance in $\c^2.$

\item $\Mgot=\r^3,$ $X|_\Dcal$ is a conformal complete minimal immersion and the conjugate immersion $(X|_\Dcal)^*$ of $X|_\Dcal$ is well defined, extends continuosly to $\overline{\Dcal}$ and satisfies the same properties. Furthermore, if $\epsilon>0$ and $\Sigma,$ $\Sigma^*$ are two finite families of closed curves in $\r^3$ spanned by a minimal surface homeomorphic to $S$ and its (well defined) conjugate surface, respectively, then $X$ and $\Dcal$ can be chosen so that $\max\{\delta_{\r^3}^H(\Sigma,X(Fr(\Dcal))),\delta_{\r^3}^H(\Sigma^*,X^*(Fr(\Dcal)))\}<\epsilon,$ where $\delta_{\r^3}^H(\cdot,\cdot)$ means Hausdorff distance in $\r^3.$

\item $\Mgot={\rm SL}(2,\c)$ and $X|_\Dcal$ is a complete null holomorphic curve. Furthermore, if $\epsilon>0$ and $\Sigma$ is a finite family of closed curves in ${\rm SL}(2,\c)$ spanned by a null holomorphic curve homeomorphic to $S,$ then $X$ and $\Dcal$ can be chosen so that $\delta_{{\rm SL}(2,\c)}^H(\Sigma,X(Fr(\Dcal)))<\epsilon,$ where $\delta_{{\rm SL}(2,\c)}^H(\cdot,\cdot)$ means Hausdorff distance in ${\rm SL}(2,\c).$

\item  $\Mgot=\h^3$ and $X|_\Dcal$ is a conformal complete CMC-1 surface in $\h^3.$ Furthermore, if $\epsilon>0$ and $\Sigma$ is a finite family of closed curves in $\h^3$ spanned by a CMC-1 surface which is homeomorphic to $S$ and can be lifted to a null holomorphic curve in ${\rm SL}(2,\c),$ then $X$ and $\Dcal$ can be chosen so that $\delta_{\H^3}^H(\Sigma,X(Fr(\Dcal)))<\epsilon,$ where $\delta_{\H^3}^H(\cdot,\cdot)$ means Hausdorff distance in $\h^3.$
\end{enumerate}
\end{corollary}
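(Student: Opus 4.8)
The plan is to deduce all four statements from Theorem \ref{th:fun} (and from the density refinement contained in Main Theorem), transferring the compact complete null curve in $\c^3$ produced there to each ambient manifold $\Mgot.$ For $\Mgot=\c^2$ and $\Mgot=\r^3$ the transfer will be a linear projection; for $\Mgot={\rm SL}(2,\c)$ and $\Mgot=\h^3$ it will be the Weierstrass correspondence between null curves in $\c^3$ and in ${\rm SL}(2,\c),$ followed by Bryant's projection $\Bcal$ in the hyperbolic case. In every case I would first invoke Theorem \ref{th:fun} to obtain $\Xcal\colon\overline{\Dcal}\to\c^3$ with all the stated properties, and then post-compose; the Runge, homeomorphism and relative-compactness properties of $\Dcal$ are unaffected, so it remains to check, after composition, completeness of the interior, the Hausdorff dimension of the boundary image, and the embeddedness of the boundary.

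\emph{The cases $\c^2$ and $\r^3.$} Write $\Pi(z_1,z_2,z_3)=(z_1,z_2)$ and let $\Re\colon\c^3\equiv\r^6\to\r^3$ be the real part; both are orthogonal projections, hence $1$-Lipschitz. If $X=(X_j)_j\colon\Dcal\to\c^3$ is null, then $\curvo{dX,dX}=0$ gives $dX_3^2=-(dX_1^2+dX_2^2),$ so $dX_1,dX_2$ cannot vanish simultaneously without forcing $dX_3=0$ against immersivity; thus $\Pi\circ X$ is a holomorphic immersion, and $\Re\circ X$ is a conformal minimal immersion whose conjugate is $\Im\circ X,$ which is globally defined on $\Dcal$ and extends continuously to $\overline{\Dcal}$ because $X$ does. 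From $|dX_3|^2=|dX_1^2+dX_2^2|\le|dX_1|^2+|dX_2|^2$ one gets $(\Pi\circ X)^*\esca{\cdot,\cdot}\ge\tfrac12 X^*\esca{\cdot,\cdot},$ while $(\Re\circ X)^*\esca{\cdot,\cdot}=\tfrac12 X^*\esca{\cdot,\cdot}$ as recalled in the Introduction; hence completeness of $X|_\Dcal$ passes to $\Pi\circ X|_\Dcal$ and $\Re\circ X|_\Dcal.$ Being Lipschitz, $\Pi$ and $\Re$ do not increase Hausdorff dimension, so $\dim_H$ of the boundary image is $\le1,$ and it equals $1$ by (and with the same $\csf$-smallness argument as in) the final paragraph of the proof of Theorem \ref{th:fun}. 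For embeddedness I would precompose with a generic $A\in\Ocal(3,\c)$ as close to $I_3$ as desired: by Remark \ref{rem:AF}, $A\circ\Xcal$ is again a compact complete null curve with embedded boundary, and since $\Xcal(Fr(\Dcal))$ has finite $\Hcal^1$-measure its difference set in $\r^6$ has Hausdorff dimension $\le2,$ so for $A$ outside a closed set of measure zero it meets neither $A^{-1}(\ker\Pi)$ nor $A^{-1}(\ker\Re)=A^{-1}(\imath\r^3)$; for such $A$ the restrictions $\Pi\circ A\circ\Xcal|_{Fr(\Dcal)}$ and $\Re\circ A\circ\Xcal|_{Fr(\Dcal)}$ (as well as $\Im\circ A\circ\Xcal|_{Fr(\Dcal)}$) are injective, hence embeddings, $Fr(\Dcal)$ being compact. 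For the ``furthermore'' claims the hypotheses supply a compact null curve $Y\colon N\to\c^3$ (the prescribed lift) whose boundary projects onto $\Sigma$ (and whose imaginary part's boundary equals $\Sigma^*$ in the $\r^3$ case); applying the density part of Main Theorem to $Y$ and composing with $\Pi$ (resp.\ with $\Re$ and $\Im$), the $1$-Lipschitz property converts the $\c^3$ estimate into the required Hausdorff-distance bounds, with $A$ taken close enough to $I_3$ not to spoil them.

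\emph{The cases ${\rm SL}(2,\c)$ and $\h^3.$} Here a direct transfer of a fixed $\c^3$-curve fails in general for the reason given below, so I would instead repeat the whole scheme inside ${\rm SL}(2,\c).$ Recall that a null curve $X\colon\Dcal\to\c^3$ with Weierstrass data $(g,\omega)$ (so $dX=(\tfrac12(1-g^2),\tfrac{\imath}2(1+g^2),g)\,\omega$) corresponds to the solution $F$ of $dF=F\begin{pmatrix}g&-g^2\\1&-g\end{pmatrix}\omega;$ this is a flat ${\rm SL}(2,\c)$-connection, so $F$ is a bona fide null curve $\Dcal\to{\rm SL}(2,\c)$ exactly when its monodromy is trivial, in which case its induced metric is a fixed multiple of $X^*\esca{\cdot,\cdot}$ (see \cite{MUY1,MUY2,AL2}), and $\Bcal\circ F$ is a conformal CMC-$1$ immersion in $\h^3$ with comparable metric, descending to $\Dcal$ whenever the monodromy lies in ${\rm SU}(2).$ Accordingly I would run the ${\rm SL}(2,\c)$-analogue of Lemma \ref{lem:runge} (the Runge--Mergelyan approximation for null curves in ${\rm SL}(2,\c)$ of \cite{AL2}, which already kills the relevant periods/monodromy), then those of Lemma \ref{lem:CL}, Lemma \ref{lem:ML} and Theorem \ref{th:fun} essentially verbatim --- the metric comparisons, the $\Psi$-type genericity for injectivity of the boundary, and the Hausdorff-dimension estimate being formally identical --- and finally compose with $\Bcal$ (smooth, hence locally Lipschitz, so preserving the Hausdorff dimension and the distance bounds on the relevant compact region) to obtain the $\h^3$ statement. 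The density parts take as initial datum the spanning null curve in ${\rm SL}(2,\c)$ (resp.\ the prescribed ${\rm SL}(2,\c)$-lift of the spanning CMC-$1$ surface).

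\emph{Expected main obstacle.} The genuinely delicate point is the period/monodromy problem in the ${\rm SL}(2,\c)$ and $\h^3$ cases: the Weierstrass data, and hence the monodromy of $F,$ changes at each recursive step and must be kept trivial (resp.\ in ${\rm SU}(2)$). This is exactly what the ${\rm SL}(2,\c)$-version of Lemma \ref{lem:runge} from \cite{AL2} is designed to handle, so once that tool is granted the rest is essentially bookkeeping. A milder issue, present in all four cases, is the transversality argument keeping the projected (or post-composed) boundary embedded; it rests entirely on the bound $\dim_H\Xcal(Fr(\Dcal))=1$ furnished by Theorem \ref{th:fun}.
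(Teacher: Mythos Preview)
Your overall strategy is sound and matches the paper's in spirit, but there is one clear methodological difference worth flagging and one place where a bit more care is needed.

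\textbf{Boundary embeddedness in (i) and (ii).} The paper does \emph{not} argue injectivity of the projected boundary by a post-hoc transversality trick; it says instead that one obtains it ``by trivial refinements of Lemmas \ref{lem:CL} and \ref{lem:ML} and Theorem \ref{th:fun}.'' Concretely, one reruns the whole recursion while tracking the operator $\Psi$ for the \emph{projected} maps $\Pi\circ X_n$ (respectively $\Re\circ X_n,$ $\Im\circ X_n$) on $\overline{\Tsf}_n$, arranging at each step (by the same ``infinitesimal variation of $\partial M_n$'' device) that these restrictions are embeddings, so that the limit projection is injective on $Fr(\Dcal)$ exactly as in the proof of (\ref{th:fun}.a). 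Your genericity route---precompose with a generic $A\in\Ocal(3,\c)$ and use $\dim_H\big(\Xcal(Fr(\Dcal))-\Xcal(Fr(\Dcal))\big)\le 2$---is a legitimate alternative, and more conceptual, but two technical points deserve attention: first, to bound the Hausdorff dimension of the difference set you really want the \emph{upper box} dimension of $\Xcal(Fr(\Dcal))$ to be $\le 1$ (this is what makes products behave; it is in fact implicit in the covering estimate with the points $x_{n,1},\dots,x_{n,a_n}$ in the proof of Theorem \ref{th:fun}); second, for the real projection $\Re$ you must check that the $6$-dimensional family $\{A^{-1}(\imath\r^3):A\in\Ocal(3,\c)\}$ of totally real $3$-planes is rich enough for the Fubini/coarea count to go through---it is, but this is where the argument is least automatic.

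\textbf{Items (iii) and (iv).} The paper is extremely terse here (it lists only (i), (ii), (iv) as ``straightforward'' and never explains (iii)); your diagnosis that a direct pointwise transfer from $\c^3$ fails because of the monodromy of $dF=F\,\Omega$ is exactly right, and rerunning Lemmas \ref{lem:CL}--\ref{lem:ML} and Theorem \ref{th:fun} with ${\rm SL}(2,\c)$-valued null curves (killing the periods at each approximation step) is the correct fix, with (iv) then obtained from (iii) via the locally bi-Lipschitz map $\Bcal$. One caution: the ${\rm SL}(2,\c)$ Runge/Mergelyan tool you invoke is not literally stated in \cite{AL2}; the ingredients are spread over \cite{MUY1,MUY2,AL2}, so you should either cite all three or indicate precisely which period-killing mechanism you are importing.
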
 

The proofs of Items (i), (ii) and (iv) follow straightforwardly except for the fact that $X|_{Fr(\Dcal)}:Fr(\Dcal)\to \Mgot$ is injective. However, this can be always guaranteed by trivial refinements of Lemmas \ref{lem:CL} and \ref{lem:ML} and Theorem \ref{th:fun}. Item (ii) generalizes the existence results obtained by the first author in \cite{Al}. 

\end{document}